\newtheorem{theorem}{Theorem}
\newtheorem{lemma}{Lemma}
\newtheorem{prop}{Proposition}
\newenvironment{conj}[1]{%
  \conjinner
}{\endconjinner}
\DeclareMathOperator{\argmin}{argmin}
\DeclareMathOperator{\degree}{deg}
\DeclareMathOperator{\tr}{tr}
\DeclareMathOperator{\diag}{diag}
\DeclareMathOperator{\spn}{span}
\newcommand{\tp}{{\scriptscriptstyle\mathsf{T}}}
\newcommand{\pp}{{\scriptscriptstyle++}}
\icmltitlerunning{Noncommutative Arithmetic-Geometric Mean Conjecture is False}
\begin{document}

\twocolumn[
\icmltitle{Recht--R\'e Noncommutative Arithmetic-Geometric Mean Conjecture is False}
%\icmltitle{Random Sampling, Matrix arithmetic-geometric mean Inequality, and Noncommutative Positivstellensatz}

% It is OKAY to include author information, even for blind
% submissions: the style file will automatically remove it for you
% unless you've provided the [accepted] option to the icml2020
% package.

% List of affiliations: The first argument should be a (short)
% identifier you will use later to specify author affiliations
% Academic affiliations should list Department, University, City, Region, Country
% Industry affiliations should list Company, City, Region, Country

% You can specify symbols, otherwise they are numbered in order.
% Ideally, you should not use this facility. Affiliations will be numbered
% in order of appearance and this is the preferred way.
\icmlsetsymbol{equal}{*}

\begin{icmlauthorlist}
\icmlauthor{Zehua Lai}{UC}
\icmlauthor{Lek-Heng Lim}{UC}
\end{icmlauthorlist}

\icmlaffiliation{UC}{Computational and Applied Mathematics Initiative,
University of Chicago, Chicago, IL 60637, USA}

\icmlcorrespondingauthor{Zehua Lai}{laizehua@uchicago.edu}

% You may provide any keywords that you
% find helpful for describing your paper; these are used to populate
% the "keywords" metadata in the PDF but will not be shown in the document
\icmlkeywords{Positivstellensatz, noncommutative arithmetic-geometric mean conjecture}

\vskip 0.3in
]

% this must go after the closing bracket ] following \twocolumn[ ...

% This command actually creates the footnote in the first column
% listing the affiliations and the copyright notice.
% The command takes one argument, which is text to display at the start of the footnote.
% The \icmlEqualContribution command is standard text for equal contribution.
% Remove it (just {}) if you do not need this facility.

%\printAffiliationsAndNotice{}  % leave blank if no need to mention equal contribution
\printAffiliationsAndNotice{\icmlEqualContribution} % otherwise use the standard text.

\begin{abstract}
Stochastic optimization algorithms have become indispensable in modern machine
learning. An unresolved foundational question in this area is the
difference between with-replacement sampling and without-replacement
sampling --- does the latter have superior convergence rate compared to the
former? A groundbreaking result of Recht and R\'e reduces the problem to a
noncommutative analogue of the arithmetic-geometric mean inequality where $n$
positive numbers are replaced by $n$ positive definite matrices. If
this inequality holds for all $n$, then without-replacement sampling indeed
outperforms with-replacement sampling. The conjectured Recht--R\'e
inequality has so far only been established for $n = 2$ and a special case
of $n = 3$. We will show that the Recht--R\'e conjecture is false for general
$n$. Our approach relies on the noncommutative Positivstellensatz, which
allows us to reduce the conjectured inequality to a semidefinite program
and the validity of the conjecture to certain bounds for the optimum
values, which we show are false as soon as $n = 5$.
\end{abstract}

\section{Introduction}

The breathtaking reach of deep learning, permeating every area of science and technology, has led to an outsize role for randomized optimization algorithms. It is probably fair to say that in the absence of randomized algorithms, deep learning would not have achieved its spectacular level of success. Fitting an exceedingly high-dimensional model with an exceedingly large training set would have been prohibitively expensive without some form of \emph{random sampling}, which in addition provides other crucial benefits such as saddle-point avoidance \cite{fang19,jin17}. As such, in machine learning computations, stochastic variants of gradient descent \cite{Bottou2010, johnson2013, Nemirovski2008}, alternating projections \cite{Strohmer2009}, coordinate descent \cite{Nesterov2012}, and other algorithms have largely overtaken their classical deterministic counterparts in  relevance and utility.

There are numerous random sampling strategies but the most fundamental question, before all other considerations, is deciding between \emph{sampling with replacement} or \emph{sampling without replacement}. In the vast majority of randomized algorithms, a random sample  is selected or a random action is performed \emph{with replacement} from a pool, making the randomness in each iteration independent and thus easier (often much easier) to analyze. However, when it comes to practical realizations of these algorithms, one invariably sample \emph{without replacement}, since they are easier (often much easier) to implement. Take the ubiquitous stochastic gradient descent for example, many if not most implementations would pass through each item exactly once in a random order --- this is sampling without replacement. Likewise, in implementations of randomized coordinate descent, coordinates are usually just chosen in a random order  --- again sampling without replacement. 

Apart from its ease of implementation, there are other reasons for favoring  without-replacement sampling. Empirical evidence \cite{bottou2009} suggests that in stochastic gradient descent,  without-replacement sampling regularly outperforms with-replacement sampling.  Theoretical results also point towards without-replacement sampling: Under standard convexity assumptions, the convergence rate of a without-replacement sampling algorithm typically beats a with-replacement sampling one by a factor of $O(n^{-1/2})$ or $O(n^{-1})$. This has been established for stochastic gradient descent \cite{Shamir2016, nagaraj2019} and for coordinate descent \cite{Beck2013, Wright2015}.

\citet{recht2012toward} proposed a matrix theoretic approach to compare the efficacy of with- and without-replacement sampling methods. Since nearly every common optimization algorithm, deterministic or randomized, works with a linear or quadratic approximation of the objective function locally, it suffices to examine the two sampling strategies on linear or quadratic functions to understand their local convergence behaviors. In this case, the iteration reduces to matrix multiplication and both sampling procedures are linearly convergent (often called ``exponential convergence'' in machine learning). The question of which is better then reduces to comparing their linear convergence rates. In this context, \citet{recht2012toward} showed that without-replacement sampling outperforms with-replacement sampling provided the following noncommutative version of the arithmetic-geometric mean inequality holds. 
\begin{conj}{1}[\citealt{recht2012toward}]\label{conj1}
Let $n$ be a positive integer, $A_1,\dots, A_n$ be symmetric positive semidefinite matrices, and $\lVert\,\cdot\,\rVert$ be the spectral norm. Then for any $m \le n$,
\begin{multline}\label{eq:conj1}
\frac{1}{n^m}\biggl\lVert\sum_{1\leq j_1,\dots, j_m \leq n}\hspace*{-4ex} A_{j_1}\cdots A_{j_m}\biggr\rVert \geq \\
\frac{(n-m)!}{n!} \biggl\lVert\sum_{\substack{1\leq j_1,\dots, j_m \leq n, \\j_1,\dots, j_m \; \text{distinct}}}\hspace*{-4ex} A_{j_1}\cdots A_{j_m}\biggr\rVert.
\end{multline}
\end{conj}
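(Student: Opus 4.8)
The plan is to recast the claimed norm inequality as a semidefinite feasibility problem via the noncommutative Positivstellensatz and then to exhibit the certificate it demands. First I would normalize. Writing $S = A_1 + \cdots + A_n$, the with-replacement sum is the multinomial expansion $\sum_{1\le j_1,\dots,j_m\le n} A_{j_1}\cdots A_{j_m} = S^m$, and since $S \succeq 0$ we have $\lVert S^m\rVert = \lVert S\rVert^m$. Both sides of \eqref{eq:conj1} are homogeneous of degree $m$ in $(A_1,\dots,A_n)$, so (discarding the trivial case $A_1=\dots=A_n=0$) we may rescale to assume $\lVert S\rVert = n$, and a further rescaling shows this normalized statement is equivalent to the inequality over the whole spectrahedron $\mathcal{K} = \{(A_1,\dots,A_n) : A_i \succeq 0,\ nI - S \succeq 0\}$. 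Thus it suffices to show that the two symmetric noncommutative polynomials $\frac{n!}{(n-m)!}I \pm Y$, where $Y = \sum_{j_1,\dots,j_m\ \text{distinct}} A_{j_1}\cdots A_{j_m}$ (symmetric, since the injective tuples are closed under reversal and reversing a product of symmetric matrices transposes it), are positive semidefinite for every tuple in $\mathcal{K}$.

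Next I would invoke the Helton--McCullough noncommutative Positivstellensatz. The quadratic module generated by the constraint polynomials $A_1,\dots,A_n,\ nI - S$ is Archimedean (the relation $nI - \sum A_i \succeq 0$ together with $A_i \succeq 0$ bounds everything), so a symmetric nc polynomial is positive semidefinite on $\mathcal{K}$ if and only if it admits a weighted sum-of-Hermitian-squares representation
\[
\tfrac{n!}{(n-m)!}I - Y \;=\; \sum_k f_k^{\tp} f_k \;+\; \sum_{i=1}^n \sum_k g_{ik}^{\tp} A_i\, g_{ik} \;+\; \sum_k h_k^{\tp}\bigl(nI - S\bigr) h_k ,
\]
and symmetrically for $+Y$, with $f_k, g_{ik}, h_k$ nc polynomials in $A_1,\dots,A_n$. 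Fixing a degree bound $d$ on these polynomials turns the search into a finite-dimensional SDP feasibility problem; the conjecture for given $n,m$ then reduces to producing a feasible point, equivalently to showing that the dual relaxation — a matrix moment / pseudo-expectation problem over $\mathcal{K}$ — has optimal value at most $\frac{n!}{(n-m)!}$.

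To treat all $n$ uniformly I would exploit symmetry: the matrices $S$ and $Y$ and the set $\mathcal{K}$ are invariant under the $S_n$-action permuting the $A_i$, so the SDP block-diagonalizes along the isotypic components indexed by Young diagrams. The hope is that a certificate can be found supported on a few low-order components, with coefficients that are explicit rational functions of $n$ and $m$; one would calibrate the ansatz on the solved instances and then verify the resulting polynomial identity symbolically. I would work through $n = 2$ (already known), then $n = 3$ and $n = 4$, and finally $n = 5$.

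The main obstacle, I expect, is precisely this last step: controlling the SDP optimum uniformly in $n$. There is no a priori reason the truncation degree $d$ stays bounded as $n$ grows, and even after the $S_n$-reduction the blocks attached to diagrams with $\Theta(n)$ boxes do not obviously shrink, so constructing a genuinely $n$-uniform weighted-SOS certificate — or, dually, bounding the moment relaxation by $\frac{n!}{(n-m)!}$ — is where essentially all the difficulty and all the risk of the argument lie. This step is decisive rather than merely technical: if for some $n$ the dual optimum exceeds $\frac{n!}{(n-m)!}$, then \eqref{eq:conj1} fails for that $n$, so the argument stands or falls on a careful case analysis, and I would regard $n = 5$ as the critical test.
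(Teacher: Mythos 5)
Your machinery is precisely the paper's: normalize to the spectrahedron $\{A_i \succeq 0,\ nI - \sum_i A_i \succeq 0\}$ (the paper's Loewner form, Conjecture~1A), note that $\frac{n!}{(n-m)!}I \pm \sum_{\text{distinct}} A_{j_1}\cdots A_{j_m}$ must be positive semidefinite there, invoke the Helton--Klep--McCullough noncommutative Positivstellensatz to convert this into a weighted sum-of-squares feasibility question, and solve the resulting SDP, exploiting the $\mathfrak{S}_n$-symmetry. Two remarks on the setup. First, your concern that the truncation degree might grow with $n$ is unfounded: the convex Positivstellensatz used here (Theorem~1.1 of Helton--Klep--McCullough, the paper's Theorem~3) is exact at degree $d = \lfloor m/2 \rfloor$, independent of $n$, so for fixed $m,n$ the SDP is not a relaxation but an exact reformulation. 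Second, your separate plain-squares term $\sum_k f_k^\tp f_k$ is harmless but redundant, since $f^\tp f = \frac{1}{n}\sum_i f^\tp A_i f + \frac{1}{n} f^\tp (nI - \sum_i A_i) f$.

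The fatal problem is the one you yourself flagged as the ``decisive'' step: the certificate you set out to exhibit does not exist, because the statement is false. Carrying out your program at $m = n = 5$, the paper finds that the minimum $\lambda_2$ for which $\lambda_2 + \sum_{\text{distinct}} X_{j_1}\cdots X_{j_5}$ lies in the quadratic module is $144.6488 > 120 = 5!$, verified independently by a Farkas infeasibility certificate for $\lambda = 120$. Since the feasible set is bounded with nonempty interior, the Positivstellensatz is an equivalence, so the absence of a certificate at $\lambda = 120$ means the lower Loewner bound genuinely fails, and hence so does \eqref{eq:conj1}. In other words, your dual scenario --- ``if for some $n$ the dual optimum exceeds $\frac{n!}{(n-m)!}$, then \eqref{eq:conj1} fails'' --- is exactly what happens at your ``critical test'' $n=5$. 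No amount of refinement of the SOS ansatz or the symmetry reduction can close this gap; the inequality can only be proved for $m = 2$ and $3$ (and checked for $m=4$, $n \le 5$), where the SDP values do fall below $n!/(n-m)!$ and explicit rational certificates of the kind you describe can indeed be extracted.
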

While one may also ask if \eqref{eq:conj1} holds for other norms, the most natural and basic choice is the spectral norm, i.e., the operator $2$-norm. Unless specified otherwise, $\lVert\,\cdot\,\rVert$ will always denote the spectral norm in this article.

To give an inkling of how \eqref{eq:conj1} arises, consider the Kaczmarz algorithm \cite{Strohmer2009} where we attempt to solve an overdetermined linear system $Cx = b$, $C\in \mathbb{R}^{p \times d}$, $p > d$, with $i$th row vector\footnote{We adopt standard convention that any vector $x\in\mathbb{R}^d$ is a column vector; a row vector will always be denoted $x^\tp$.} $c_i^\tp$ where $c_i \in \mathbb{R}^d$. For $k = 1,2,\dots,$ the $(k+1)$th iterate is formed with a randomly chosen $i$ and 
\[
x^{(k+1)} = x^{(k)}+ \frac{b_i-\langle c_i, x^{(k)}\rangle}{\lVert c_i\rVert^2}c_i.
\] 
The $k$th error $e^{(k)} = x^{(k)} - x^*$ is then
\[
e^{(k+1)} = \biggl(I-\frac{c_i c_i^\tp }{\lVert c_i\rVert^2}\biggr)e^{(k)} \eqqcolon P_{c_i} e^{(k)},
\]
where $P_{c} \in \mathbb{R}^{d \times d}$, the orthogonal projector onto $\spn\{c\}^\perp$, is clearly symmetric positive semidefinite. A careful analysis would show that the relative efficacy of with- and without-replacement sampling depends on a multitude of inequalities like $\lVert A^4+B^4+AB^2A+BA^2B\rVert \ge 2\lVert AB^2A+BA^2B\rVert$, which are difficult to analyze on a case-by-case basis. Nevertheless, more general heuristics \cite{recht2012toward} would lead to \eqref{eq:conj1} --- if it holds, then without-replacement sampling is expected to outperform with-replacement sampling. In fact, the gap  can be significant --- for random Wishart matrices, \citet{recht2012toward}  showed that the ratio between the two sides of \eqref{eq:conj1} increases exponentially with $m$.

\paragraph{Current status:} \citet{recht2012toward} applied results from random matrix theory to show that Conjecture~\ref{conj1} holds with high probability for (i) independent Wishart matrices, and (ii) the incremental gradient method. To date, extensive numerical simulations have produced no counterexample.
Conjecture~\ref{conj1} has been rigorously established only in very special cases, notably for $(m, n) = (2,2)$  \cite{recht2012toward} and $(m,n) = (3,3k)$ \citep{zhang2018note}.

\paragraph{Our contributions:}  We show how to  transform Conjecture~\ref{conj1}  into a form where the \emph{noncommutative Positivstellensatz} applies, which implies in particular that for any specific values of $m$ and $n$, the conjecture can be checked via two semidefinite programs. This allows us to show in Section~\ref{sec3} that the conjecture is false as soon as $m=n = 5$. We also establish in Section~\ref{sec2} that the conjecture holds for $m = 2$ and $3$ with arbitrary  $n$ by extending the approach in \citet{zhang2018note}. While the conjectured inequality \eqref{eq:conj1} is clearly sharp (as we may choose all $A_i$'s to be equal) whenever it is true, we show in Section~\ref{sec4} that the $m=2$ case may nonetheless be improved in a different sense, and we do likewise for $m = 3$ in Section~\ref{sec2}.  The $m = 4$ case remains open but our  noncommutative Positivstellensatz approach permits us to at least check that it holds for $n = 4$ and $5$ in Section~\ref{sec3}.

Over the next two sections, we will transform Recht and R\'e's Conjecture~\ref{conj1} into a ``Loewner form'' (Conjecture~\ref{conjL}), a ``sum-of-squares form'' (Conjecture~\ref{conjSOS}), and finally a ``semidefinite program form'' (Conjecture~\ref{conjSDP}). All four conjectures are equivalent but the correctness of the last one for any $m,n$ can be readily checked as a semidefinite program.

\section{Recht--R\'e inequality for $m=2$ and $3$}\label{sec2}

Our goal here is to establish \eqref{eq:conj1} for a pair and a triple of matrices. In so doing, we take Conjecture~\ref{conj1} a step closer to a form where noncommutative Positivstellensatz applies. There is independent value in establishing these two special cases given that the classical noncommutative arithmetic-geometric-harmonic mean inequality \cite{intell} is only known for a pair of matrices but nonetheless attracted a lot of interests from linear algebraists. These special cases also have implications on randomized algorithms --- take the Kaczmarz algorithm for example, the fact that Conjecture~\ref{conj1} holds for $m=2$ and $3$ implies that if we randomly choose two or three distinct samples, perform the iterations, and sample again, then this ``replacing after every two or three samples'' strategy will converge faster than a ``replacing after every sample'' strategy.

We begin by providing some context for the inequality \eqref{eq:conj1}.  The usual arithmetic-geometric mean inequality for $n$ nonnegative real numbers $a_1, \dots , a_n$, i.e.,
\[
(a_1 + \dots + a_n)/n \geq (a_1  \cdots a_n)^{1/n},
\]
is a special case of Maclaurin’s inequality \cite{hardy1952inequalities}: If we define
\[
s_m \coloneqq \frac{1}{\binom{n}{m}} \sum_{1\leq j_1 < \dots < j_m \leq n}\hspace*{-4ex} a_{j_1} \cdots a_{j_m},
\]
then $s_1 \geq \sqrt{s_2} \geq \dots \geq \sqrt[n]{s_n}$. So $s_1 \ge \sqrt[m]{s_m}$ gives us
\[
\frac{1}{n^m} (a_1 + \dots + a_n)^m \ge
\frac{(n-m)!}{n!} \hspace*{-1ex}\sum_{\substack{1\leq j_1,\dots, j_m \leq n, \\j_1,\dots, j_m \; \text{distinct}}}\hspace*{-4ex} a_{j_1} \cdots a_{j_m},
\]
which is just \eqref{eq:conj1} for $1$-by-$1$ positive semidefinite matrices.

For real symmetric or complex Hermitian  matrices $A, B$, the Loewner order is defined by $A\succeq B$ iff $A-B$ is positive semidefinite. The Maclaurin’s inequality has several noncommutative extensions but we regard the following as the starting point for all noncommutative  arithmetic-geometric mean inequalities.
\begin{prop} \label{propmn2}
For any unitary invariant norm $\lVert\, \cdot\, \rVert$ and Hermitian matrices $A, B$, $\lVert AB+BA\rVert \leq \lVert A^2+B^2\rVert$ and $2\lVert AB+BA\rVert \leq \lVert (A+B)^2\rVert$.
\end{prop}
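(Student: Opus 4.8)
The plan is to polarize and reduce both estimates to one elementary fact linking the Loewner order and unitarily invariant norms. Put $U \coloneqq (A+B)^2 \succeq 0$ and $V \coloneqq (A-B)^2 \succeq 0$; expanding gives $AB+BA = \tfrac12(U-V)$ and $A^2+B^2 = \tfrac12(U+V)$. The workhorse is the standard lemma: if $C = C^* \succeq 0$ and $D = D^*$ satisfy $-C \preceq D \preceq C$, then $\lVert D\rVert \le \lVert C\rVert$ for every unitarily invariant norm. I would prove it by observing that $D \preceq C$ and $-D \preceq C$ force $\lambda^{\downarrow}_k(D) \le \lambda^{\downarrow}_k(C)$ and $\lambda^{\downarrow}_k(-D) \le \lambda^{\downarrow}_k(C)$ (Weyl monotonicity); choosing orthonormal eigenvectors for the positive eigenvalues of $D$ and for the positive eigenvalues of $-D$ — these are mutually orthogonal — and using $|v^*Dv| \le v^*Cv$ together with Ky Fan's maximum principle yields the weak majorization $\sigma(D) \prec_{w} \sigma(C)$ of singular values; Fan's dominance theorem then gives $\lVert D\rVert \le \lVert C\rVert$.

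With this, the first inequality is immediate: take $C = A^2+B^2 = \tfrac12(U+V)$ and $D = AB+BA = \tfrac12(U-V)$, so that $C-D = V = (A-B)^2 \succeq 0$ and $C+D = U = (A+B)^2 \succeq 0$; hence $-C \preceq D \preceq C$ and the lemma gives $\lVert AB+BA\rVert \le \lVert A^2+B^2\rVert$. Note that no positivity of $A$ or $B$ enters here.

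For the second inequality the same sandwich cannot be used in full — and, unlike the first, it does require $A,B \succeq 0$ (for general Hermitian matrices it fails, e.g.\ $A = \diag(1,0)$, $B = \diag(-3,0)$), so I read the hypothesis accordingly. One half still holds: $(A+B)^2 - 2(AB+BA) = (A-B)^2 \succeq 0$, i.e.\ $2(AB+BA) \preceq (A+B)^2$, which bounds the positive part of the spectrum; the other half $2(AB+BA) \succeq -(A+B)^2$ is false, so the negative part needs a separate treatment. For the spectral norm this is quick: writing $S \coloneqq A+B \succeq 0$ and $T \coloneqq A-B$ (note that $A,B\succeq0$ is precisely $-S \preceq T \preceq S$), for any unit vector $v$ we get $v^*\bigl(2(AB+BA)\bigr)v = v^*(S^2-T^2)v = \lVert Sv\rVert^2 - \lVert Tv\rVert^2 \in \bigl[-\lVert T\rVert^2,\ \lVert S\rVert^2\bigr]$, and since $-S \preceq T \preceq S$ gives $\lVert T\rVert \le \lVert S\rVert$ (the lemma again), both endpoints are at most $\lVert S\rVert^2 = \lVert(A+B)^2\rVert$. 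For a general unitarily invariant norm I would instead prove the weak majorization $\sigma\bigl(2(AB+BA)\bigr) = \sigma(S^2-T^2) \prec_{w} \lambda(S^2) = \lambda\bigl((A+B)^2\bigr)$ directly: the positive eigenvalues of $S^2-T^2$ are dominated via $S^2-T^2 \preceq S^2$, and the negative ones via $-(S^2-T^2) \preceq T^2$ combined with $\lambda(T^2) \prec_{w} \lambda(S^2)$ — the latter coming from $\sigma(T) \prec_{w} \sigma(S)$ (established in the proof of the lemma) and the fact that $t\mapsto t^2$, being convex and increasing on $[0,\infty)$, preserves weak majorization — after which one runs Ky Fan's principle over the mutually orthogonal positive and negative eigenspaces of $S^2-T^2$.

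The step I expect to be the main obstacle is exactly this last combination: a naive accounting bounds the positive contribution by the top eigenvalues of $(A+B)^2$ and the negative contribution by the top eigenvalues of $T^2$, which double-counts the large eigenvalues of $(A+B)^2$; making it work requires charging the two contributions to \emph{disjoint} portions of the spectrum of $(A+B)^2$, and this is where the positive semidefiniteness of $A$ and $B$ must enter essentially, consistent with the failure of the inequality for general Hermitian $A,B$.
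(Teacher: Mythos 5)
Your treatment of the first inequality is correct and is essentially the paper's own argument: the sandwich $-(A^2+B^2)\preceq AB+BA\preceq A^2+B^2$ (equivalently $(A\pm B)^2\succeq 0$) combined with the standard lemma that $-C\preceq D\preceq C$ with $C\succeq 0$ forces $\lVert D\rVert\le\lVert C\rVert$ for every unitarily invariant norm --- this lemma is exactly the Lemma~2.1 of \citet{bhatia2008matrix} that the paper cites, and your Weyl/Ky~Fan proof of it is sound. Your observation about the second inequality is also correct and worth stating plainly: for arbitrary Hermitian $A,B$ it is \emph{false} (your example $A=\diag(1,0)$, $B=\diag(-3,0)$ works), so it must be read with $A,B\succeq 0$, consistent with the paper's remark that it is the $m=n=2$ case of Conjecture~1. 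Note that the paper's one-line proof does not in fact cover the second inequality either: the sandwich yields $2(AB+BA)\preceq(A+B)^2$, but the companion bound $2(AB+BA)\succeq-(A+B)^2$ fails even for positive semidefinite $A,B$ (e.g.\ two suitably chosen rank-one projections), so something beyond the cited lemma is needed. Your spectral-norm argument for the second inequality under $A,B\succeq 0$ --- writing $2(AB+BA)=S^2-T^2$ with $S=A+B$, $T=A-B$, bounding $v^*(S^2-T^2)v$ between $-\lVert T\rVert^2$ and $\lVert S\rVert^2$, and using $\lVert T\rVert\le\lVert S\rVert$ --- is complete, and the spectral norm is the only norm the rest of the paper actually uses.

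For general unitarily invariant norms, however, the obstacle you flag is a genuine gap and your outline does not close it. The weak majorization $\lambda(T^2)\prec_{w}\lambda(S^2)$ is correct (it follows from $s(T)\prec_{w}s(S)$ and the convex increasing map $t\mapsto t^2$; the entrywise bound $s_j(T)\le s_j(S)$ is false in general, so weak majorization is all you have). But bounding the top $p$ positive eigenvalues of $S^2-T^2$ by $\sum_{i\le p}\lambda_i(S^2)$ and the top $q$ negative ones by $\sum_{i\le q}\lambda_i(T^2)\le\sum_{i\le q}\lambda_i(S^2)$ gives $\sum_{i\le p}\lambda_i(S^2)+\sum_{i\le q}\lambda_i(S^2)$, not the required $\sum_{i\le p+q}\lambda_i(S^2)$, and I see no way to perform the ``disjoint charging'' with only the data you have extracted. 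The standard way to finish (in \citet{bhatia2008matrix}) takes a different route: one first proves $\lVert AB\rVert\le\tfrac14\lVert(A+B)^2\rVert$ for positive semidefinite $A,B$ and then applies the triangle inequality $\lVert AB+BA\rVert\le 2\lVert AB\rVert$. So your proposal fully proves the first inequality, proves the second only for the spectral norm, and leaves the general unitarily invariant case open.
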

\begin{proof}
Since $-A^2 -B^2 \preceq AB+BA \preceq A^2 + B^2$, by Lemma~2.1 in \citet{bhatia2008matrix}, the desired inequalities hold for any unitary invariant norm. 
\end{proof}
The result was extended  to compact operators on a separable Hilbert space and strengthened to $2\lVert A^*B\rVert \leq \lVert A^*A + B^*B\rVert$ in \citet{bhatia1990singular}, with yet other extensions in \citet{bhatia2008matrix, bhatia2000notes}. In \citet{recht2012toward}, Conjecture~\ref{conj1} was also formulated as an extension of Proposition~\ref{propmn2}, with the second inequality corresponding to the $m=n=2$ case.

Straightforward counterexamples for $n = 3$ show  that we cannot simply drop the norm in \eqref{eq:conj1} and replace the inequality $\ge$ with the Loewner order $\succeq$. Nevertheless Conjecture~\ref{conj1} may  be written as \emph{two} Loewner inequalities, as demonstrated by \citet{zhang2018note}.
\begin{conj}{1A}[Loewner  form]\label{conjL}
Let $A_1,\dots, A_n$ be symmetric positive semidefinite and $A_1+\dots + A_n \preceq n I$. Then for any $m \le n$,
\begin{equation}\label{eq:conjL}
-\frac{n!}{(n-m)!} I \preceq \sum_{\substack{1\leq j_1,\dots, j_m \leq n, \\j_1,\dots, j_m \; \text{distinct}}}\hspace*{-4ex} A_{j_1}\cdots A_{j_m} \preceq  \frac{n!}{(n-m)!} I.
\end{equation}
\end{conj}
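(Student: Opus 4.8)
The plan is to first show that \eqref{eq:conjL} is \emph{equivalent} to Conjecture~\ref{conj1} --- so that proving the Loewner form is all that is required --- and then to establish \eqref{eq:conjL} directly for the values of $m$ within reach. For the reduction, write $S\coloneqq A_1+\dots+A_n\succeq0$ and let $D_m$ be the distinct-indices sum appearing in \eqref{eq:conjL}; note that $D_m$ is self-adjoint (reversing an index tuple sends one summand to the adjoint of another summand), that the all-indices sum in \eqref{eq:conj1} is exactly $S^m$, and that $\lVert S^m\rVert=\lVert S\rVert^m$ since $S\succeq0$. Both sums are homogeneous of degree $m$ in $(A_1,\dots,A_n)$, so after rescaling each $A_i$ by $n/\lVert S\rVert$ (assuming $S\ne0$; otherwise everything vanishes) we may assume $\lVert S\rVert=n$, i.e.\ $S\preceq nI$; under this normalization \eqref{eq:conj1} reads $\lVert D_m\rVert\le n!/(n-m)!$, which for the self-adjoint $D_m$ is precisely \eqref{eq:conjL}. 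Hence it suffices to prove \eqref{eq:conjL}, and the case $m=1$ is just the hypothesis $S\preceq nI$ together with $S\succeq0$.

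For $m=2$ I would use the identity $D_2=S^2-\sum_i A_i^2$. The operator Cauchy--Schwarz inequality $S^2=(\sum_i A_i)^2\preceq n\sum_i A_i^2$ gives $D_2\preceq\tfrac{n-1}{n}S^2\preceq n(n-1)I$, which matches the upper bound and is attained at $A_1=\dots=A_n=I$. For the lower bound, $0\preceq A_i\preceq nI$ yields $A_i^2\preceq nA_i$, hence $\sum_i A_i^2\preceq nS$; since the eigenvalues of $S$ lie in $[0,n]$ and $n\lambda-\lambda^2\le n^2/4\le n(n-1)$ there (for $n\ge2$), we get $nS-S^2\preceq n(n-1)I$ and therefore $D_2\succeq S^2-nS\succeq-n(n-1)I$. (This in fact yields the stronger bound $D_2\succeq-\tfrac{n^2}{4}I$, presaging the Section~\ref{sec4} improvement.)

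For $m=3$, collecting the terms of $S^3=(\sum_i A_i)^3$ according to the coincidence pattern of the three indices and solving for $D_3$ gives the identity
\[
D_3 \;=\; S^3-\Bigl(\textstyle\sum_i A_i^2\Bigr)S-S\Bigl(\textstyle\sum_i A_i^2\Bigr)-\textstyle\sum_i A_iSA_i+2\textstyle\sum_i A_i^3,
\]
which specializes correctly to $n(n-1)(n-2)I$ when all $A_i=I$. Bounding $D_3$ then requires controlling these five terms \emph{together}, since term-by-term estimates are too lossy --- the cancellations among $(\sum_i A_i^2)S+S(\sum_i A_i^2)$, $\sum_i A_iSA_i$, and $\sum_i A_i^3$ are essential. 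Using $0\preceq A_i\preceq nI$, the positivity of $\sum_i A_iSA_i$ and $\sum_i A_i^3$, operator Cauchy--Schwarz, and scalar polynomial inequalities in the eigenvalues of $S$ on $[0,n]$, one carries this out by adapting the combinatorial argument of \citet{zhang2018note}; the new ingredient is handling arbitrary $n$ rather than only $n=3k$, and a Section~\ref{sec4}-type refinement again applies.

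The hard part is general $m$, and it is in fact insurmountable. Already for $m=4$ the inclusion--exclusion expansion of $D_m$ throws up genuinely noncommutative terms such as $\sum_{i\ne j}A_iA_jA_iA_j$ that cannot be written as $S$ times a symmetric function of the $A_i$, and the monotonicity step driving the $m\le3$ proofs breaks down because the map $X\mapsto SX+XS$ is \emph{not} Loewner-order preserving (it fails already on $2\times2$ matrices). This is not a mere technical barrier: as the abstract announces and Section~\ref{sec3} shows via the semidefinite-programming reformulation, \eqref{eq:conjL} is actually \emph{false} once $m=n=5$. Consequently no proof of \eqref{eq:conjL} as stated can exist; the argument above settles exactly $m\in\{1,2,3\}$ for all $n$, and the sporadic cases $(m,n)\in\{(4,4),(4,5)\}$ will instead be verified later by semidefinite programming.
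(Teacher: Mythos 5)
Your central observations are correct and consistent with how the paper treats this statement: \eqref{eq:conjL} is a conjecture, not a theorem; the paper establishes only its equivalence with Conjecture~\ref{conj1} (citing \citet{zhang2018note}) together with the cases $m=2,3$, and then refutes the general statement at $m=n=5$, so your conclusion that no proof as stated can exist is exactly the paper's position. Your reduction --- $D_m$ is self-adjoint because tuple reversal permutes the summands, the unrestricted sum equals $S^m$ with $\lVert S^m\rVert=\lVert S\rVert^m$ for $S\succeq 0$, and homogeneity of degree $m$ lets one normalize $\lVert S\rVert=n$ --- is a complete and correct account of an equivalence that the paper only asserts with a citation. For $m=2$ your route genuinely differs from Theorem~\ref{theorem1}: the upper bound coincides with the paper's ($S^2\preceq n\sum_iA_i^2$ from $\sum_{i<j}(A_i-A_j)^2\succeq0$), but your lower bound via $A_i^2\preceq nA_i$ and the scalar estimate $n\lambda-\lambda^2\le n^2/4$ on the spectrum of $S$ is shorter than the paper's substitution $B_i=A_i+\frac{1}{n}\bigl(nI-\sum_jA_j\bigr)$, and it directly yields $D_2\succeq-\frac{n^2}{4}I$, which beats the conjectured $-n(n-1)I$ though it is still a factor of roughly $n/(n-1)$ away from the sharp $-\frac{n(n-1)}{4}I$ of Theorem~\ref{thm:improve2}.

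The one genuine gap is $m=3$. Your identity for $D_3$ is correct (and specializes correctly at $A_i=I$), but ``adapt the combinatorial argument of Zhang'' is not a proof, and the identity alone does not produce the bounds: the paper's Theorem~\ref{theorem 2} requires two further devices that are not visible from the inclusion--exclusion expansion, namely the tangent-line majorization of $f(x)=x^2(n-x)$ on $[0,n]$ (with the tangency point $l$ chosen so that $1/2\le l/n\le 2/3$) for the upper bound, and the $B_i$-substitution combined with the cubic inequality $-x(x-n)(x-1)\le(n-1)^2x/4$ for the lower bound. That case should either be carried out in full or explicitly deferred to Theorem~\ref{theorem 2}, exactly as the $(4,4)$, $(4,5)$ and $(5,5)$ cases are deferred to the semidefinite programs of Section~\ref{sec3}.
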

We prefer this equivalent formulation \eqref{eq:conjL} as the original formulation \eqref{eq:conj1} hides an asymmetry --- note that there is an upper bound and a lower bound in \eqref{eq:conjL} and there is no reason to expect that they should have the same magnitude. In fact, as we will see in the later sections, the upper and lower bounds have different magnitudes in every case that we examined.

We will next prove Conjecture~\ref{conj1} in its equivalent form Conjecture~\ref{conjL} for $m = 2$ and $3$. Our proofs rely on techniques introduced by \citet{zhang2018note} in his proof for the case $m = 3$, $n = 3k$, but our two additional contributions are that (i) we will obtain better lower bounds (deferred to  Section~\ref{sec4}), and (ii) our proof will work for arbitrary $n$ (not necessarily a multiple of $3$).
\begin{theorem}[Recht--R\'e for $m=2$]\label{theorem1} 
Let $A_1,\dots, A_n$ be symmetric positive semidefinite and $A_1+\dots + A_n \preceq nI$. Then
\begin{equation}\label{eq:m2}
-n(n-1)I \preceq \sum_{i\neq j}A_iA_j \preceq n(n-1)I.
\end{equation}
\end{theorem}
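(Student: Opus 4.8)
The plan is to collapse \eqref{eq:m2} onto the single matrix $S \coloneqq A_1 + \dots + A_n$ by way of the identity
\[
\sum_{i \neq j} A_i A_j = \Bigl(\sum_{i=1}^{n} A_i\Bigr)^{\!2} - \sum_{i=1}^{n} A_i^2 = S^2 - \sum_{i=1}^{n} A_i^2 ,
\]
which incidentally shows the left-hand side is symmetric, so the Loewner comparisons make sense. The hypotheses say $0 \preceq A_i \preceq S \preceq nI$ for all $i$, and from these I would record three elementary facts: (a) since $0 \preceq A_i \preceq nI$ and $A_i$ commutes with $nI - A_i$, we have $A_i^2 \preceq n A_i$, hence $\sum_i A_i^2 \preceq nS$; (b) the same argument applied to $S$ gives $S^2 \preceq nS \preceq n^2 I$; (c) summing $(A_i - A_j)^2 \succeq 0$ over $i < j$ gives $S^2 = \sum_i A_i^2 + \sum_{i<j}(A_iA_j + A_jA_i) \preceq n \sum_i A_i^2$, i.e.\ $\sum_i A_i^2 \succeq \tfrac1n S^2$.

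For the upper bound, feed (c) and then (b) into the identity:
\[
\sum_{i \neq j} A_i A_j = S^2 - \sum_i A_i^2 \preceq S^2 - \tfrac1n S^2 = \tfrac{n-1}{n} S^2 \preceq \tfrac{n-1}{n}\, n^2 I = n(n-1) I .
\]
For the lower bound, use (a) instead and complete the square:
\[
\sum_{i \neq j} A_i A_j = S^2 - \sum_i A_i^2 \succeq S^2 - nS = \Bigl(S - \tfrac n2 I\Bigr)^{\!2} - \tfrac{n^2}{4} I \succeq -\tfrac{n^2}{4} I \succeq -n(n-1) I ,
\]
the last inequality because $\tfrac{n^2}{4} \le n(n-1)$ for every integer $n \ge 2$ (and \eqref{eq:m2} is vacuous for $n=1$). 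Note that this route in fact delivers the sharper lower bound $-\tfrac{n^2}{4} I$, i.e.\ an improvement of the type discussed in Section~\ref{sec4}.

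I expect the only genuine obstacle to be resisting a term-by-term estimate: bounding each $A_iA_j + A_jA_i \succeq -(A_i^2 + A_j^2)$ and summing over pairs yields only $-n^2(n-1)I$, off by a factor of order $n$, precisely because it never uses the constraint $S \preceq nI$ on the \emph{whole} sum; the completed-square estimate fixes this by applying the constraint to $S$ globally. A secondary point is the asymmetry flagged after Conjecture~\ref{conjL}: the two directions require opposite inputs — the upper bound needs $\sum_i A_i^2 \succeq \tfrac1n S^2$, the lower bound needs $\sum_i A_i^2 \preceq nS$ — and this same two-sided bookkeeping, substantially heavier, is what carries the $m=3$ case following \citet{zhang2018note}.
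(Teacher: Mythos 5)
Your proof is correct, and while the upper bound is essentially the paper's argument (both amount to $\sum_{i<j}(A_i-A_j)^2\succeq 0$, i.e.\ $\sum_{i\neq j}A_iA_j\preceq\frac{n-1}{n}S^2$, followed by $S^2\preceq n^2I$), your lower bound takes a genuinely different and cleaner route. The paper first shows $-n\sum_{i\neq j}A_iA_j\preceq\sum_{i<j}(A_i-A_j)^2$, then performs the shift $B_i\coloneqq A_i+\frac1n(nI-S)$ so that $\sum_iB_i=nI$ exactly, and only then applies the spectral bound $B_i^2\preceq nB_i$ to each summand; you skip the shift entirely by applying $A_i^2\preceq nA_i$ directly to get $\sum_iA_i^2\preceq nS$ and then completing the square in $S$, using the constraint $S\preceq nI$ only through the single matrix $S$. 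This is shorter and buys a strictly better constant: $-\frac{n^2}{4}I$ instead of $-n(n-1)I$, since $\frac{n^2}{4}\leq n(n-1)$ for $n\geq 2$. It does not, however, reach the $-\frac{n(n-1)}{4}I$ of Theorem~\ref{thm:improve2}, which the paper conjectures to be sharp --- for $n=2$ your method gives $-I$ whereas the attained optimum is $-\frac12 I$ --- so the sum-of-squares machinery of Section~\ref{sec4} is still needed for the sharper statement. Your closing observation about why a term-by-term estimate $A_iA_j+A_jA_i\succeq-(A_i^2+A_j^2)$ loses a factor of $n$ is also accurate.
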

\begin{proof}
The right inequality in \eqref{eq:m2} follows from
\begin{align*}
(n&-1)\sum_{i,j}A_iA_j- n\sum_{i\neq j}A_iA_j\\
&= (n-1)\sum_i A_i^2 - \sum_{i\neq j}A_iA_j
= \sum_{i<j}(A_i-A_j)^2 \succeq 0,
\end{align*}
and so
\[
\sum_{i\neq j}A_iA_j \preceq \frac{n-1}{n}\sum_{i,j}A_iA_j \preceq n(n-1)I.
\]
For the left inequality in \eqref{eq:m2}, expand $(\sum_i A_i)^2\succeq 0$ to get
\[
\sum_i A_i^2 \succeq -\sum_{i\neq j}A_iA_j.
\]
Let $B \coloneqq nI-\sum_i A_i \succeq 0$ and $B_i \coloneqq  A_i+\frac{1}{n}B \succeq 0$. So $\sum_i B_i = nI$. Then
\begin{align*}
-n\sum_{i\neq j}A_iA_j &= -(n-1)\sum_{i\neq j}A_iA_j-\sum_{i\neq j}A_iA_j\\
&\preceq (n-1)\sum_i A_i^2 -\sum_{i\neq j}A_iA_j\\
&= \sum_{i<j}(A_i-A_j)^2
= \sum_{i<j}(B_i-B_j)^2\\
&= (n-1)\sum_i B_i^2 -\sum_{i\neq j}B_iB_j\\
&= n\sum_i B_i^2 - \Bigl(\sum_i B_i\Bigr)^2\\
&= n\sum_i B_i^2 - n^2I.
\end{align*}
Therefore
\[
-\sum_{i\neq j}A_iA_j-(n-1)nI \preceq \sum_i B_i^2 - n^2I =\sum_i (B_i^2-n B_i).
\]
The eigenvalues of $B_i$ fall between $0$ and $n$, so the eigenvalues of $B_i^2-n B_i$ are all nonpositive, i.e., $B_i^2- nB_i \preceq 0$. Hence $-\sum_{i\neq j}A_iA_j-(n-1)nI\preceq 0$.
\end{proof}
The right inequality of \eqref{eq:m2} is  clearly sharp. In Section~\ref{sec4}, we will prove a stronger result,  improving the constant in the left inequality of \eqref{eq:m2} to $n(n-1)/4$. 

Following \citet{zhang2018note}, we write $\mathbb{E}_{i_1,\dots, i_k}$ for  expectation or average over all indices $1\leq i_1,\dots, i_k \leq n$, and $\widetilde{\mathbb{E}}_{i_1,\dots, i_k}$ for that over distinct indices $1 \leq i_1,\dots, i_k \leq n$.

\begin{theorem}[Recht--R\'e for $m=3$]\label{theorem 2}
Let $A_1,\dots, A_n$ be symmetric positive semidefinite and $A_1+\dots + A_n \preceq nI$. Then
\begin{equation}\label{eq:m3}
-I \preceq \widetilde{\mathbb{E}}_{i,j,k}A_iA_jA_k \preceq I.
\end{equation}
\end{theorem}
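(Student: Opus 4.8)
The plan is to adapt and extend the method behind Theorem~\ref{theorem1}. The starting point is a separation identity: sorting the $n^3$ monomials of $\bigl(\sum_iA_i\bigr)^3$ according to how many of the three indices coincide yields, with $S:=A_1+\dots+A_n$,
\[
n(n-1)(n-2)\,\widetilde{\mathbb{E}}_{i,j,k}A_iA_jA_k \;=\; S^3-W-C,\qquad W:=\sum_{i\neq j}\bigl(A_i^2A_j+A_iA_jA_i+A_iA_j^2\bigr),\quad C:=\sum_iA_i^3 .
\]
Since $\mathbb{E}_iA_i=\tfrac1nS\preceq I$ we have $0\preceq S^3\preceq n^3I$, with equality precisely where $S=nI$, so \eqref{eq:m3} amounts to the two-sided ``collision estimate'' $n(3n-2)I\preceq W+C\preceq n(2n^2-3n+2)I$ when $S=nI$, and an estimate with extra room otherwise (the slack coming from $S^3\preceq nR$ and similar homogeneous inequalities, or --- as in the $m=2$ lower bound --- from the shift $B:=nI-S\succeq0$, $B_i:=A_i+\tfrac1nB\succeq0$, under which $\sum_iB_i=nI$, $0\preceq B_i\preceq nI$, and $A_i-A_j=B_i-B_j$).

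To prove the collision estimate in the normalized case $S=nI$, write $A_i=I+D_i$, so $\sum_iD_i=0$ and $-I\preceq D_i\preceq(n-1)I$. Using $W+C=QS+SQ+R-2C$ (valid when $S=nI$) with $Q:=\sum_iA_i^2$ and $R:=\sum_iA_iSA_i$, a direct computation collapses everything to
\[
W+C \;=\; n(3n-2)I+\sum_i\bigl(3(n-2)D_i^2-2D_i^3\bigr),
\]
so the task is $0\preceq\sum_i\bigl(3(n-2)D_i^2-2D_i^3\bigr)\preceq 2n(n-1)(n-2)I$. For $n\ge4$ the left inequality is immediate, because $3(n-2)I-2D_i\succeq(n-4)I\succeq0$ makes each summand a product of two commuting positive semidefinite matrices; the case $n=3$ and the right inequality are obtained by combining the spectral bounds $-D_i^2\preceq D_i^3\preceq(n-1)D_i^2$, the constraint $\sum_iD_i=0$, and sum-of-squares identities in the spirit of $\sum_{i,j,k\ \text{distinct}}(A_i-A_j)A_k(A_i-A_j)\succeq0$ and $\sum_{i<j}(A_i-A_j)S(A_i-A_j)\succeq0$ (which relate $W$, $R$ and $\sum_iD_i^3$ back to $\widetilde{\mathbb{E}}_{i,j,k}A_iA_jA_k$ and $S^3$), with Proposition~\ref{propmn2} supplying sign control on the remaining anticommutators. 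Just as in Theorem~\ref{theorem1}, at the extremal point $A_1=\dots=A_n=I$ all difference-terms vanish and the estimate is an equality, so an argument that preserves the cancellation is exactly tight for the upper bound of \eqref{eq:m3}.

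The main obstacle is that the collision terms cannot be bounded monomial-by-monomial: $A_i^2A_j+A_jA_i^2$, and therefore $QS+SQ$ and $\sum_iD_i^3$, are genuinely indefinite --- this is exactly the $m=n=2$ obstruction, the fact that Proposition~\ref{propmn2} cannot be upgraded to a Loewner inequality, which is also why Conjecture~\ref{conj1} is phrased with the spectral norm. A naive application of Proposition~\ref{propmn2} therefore overshoots the constants $n(3n-2)$ and $2n(n-1)(n-2)$ by factors growing with $n$, and one must keep the anticommutators grouped and extract the precise cancellation through the sum-of-squares identities above. This is also where the argument genuinely extends \citet{zhang2018note}, whose proof for $(m,n)=(3,3k)$ partitions the $n$ matrices into triples and so requires $3\mid n$: the shift-and-average scheme used here is insensitive to the value of $n\bmod 3$, and, as a by-product, delivers the improved lower bound recorded in Section~\ref{sec4}; for the lower bound of \eqref{eq:m3} itself one can afford to be wasteful, since --- exactly as for $m=2$ --- the constant $-n(n-1)(n-2)$ is not optimal.
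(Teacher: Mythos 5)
Your algebra is sound as far as it goes: the decomposition $n(n-1)(n-2)\,\widetilde{\mathbb{E}}_{i,j,k}A_iA_jA_k=S^3-W-C$, the identity $W+C=QS+SQ+R-2C$, and the collapse to $n(3n-2)I+\sum_i\bigl(3(n-2)D_i^2-2D_i^3\bigr)$ when $S=nI$ all check out, and the observation that $D_i\bigl(3(n-2)I-2D_i\bigr)D_i\succeq0$ for $n\ge4$ is a genuinely slick proof of the upper bound \emph{in the normalized case}. But there are two real gaps. First, the reduction to $S=nI$ is asserted, not proved. Unlike the $m=2$ lower bound in Theorem~\ref{theorem1}, where the shift $A_i\mapsto B_i$ is harmless because the relevant expression $\sum_{i<j}(A_i-A_j)^2$ depends only on differences, here $\widetilde{\mathbb{E}}_{i,j,k}A_iA_jA_k$ is not shift-invariant and is not related to $\widetilde{\mathbb{E}}_{i,j,k}B_iB_jB_k$ by any one-sided Loewner inequality: the cross terms involving $B$ are indefinite. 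Nor does ``$S^3\preceq nR$ and similar homogeneous inequalities'' convert a bound valid on the slice $S=nI$ into one valid on the cap $S\preceq nI$; you cannot rescale onto the slice unless $S$ is already a multiple of $I$. The paper inserts the constraint at one carefully chosen spot (the middle factor of a congruence): for the upper bound via $ABA\preceq ACA$ for $B\preceq C$, applied to $A_{i_{l+1}}+\dots+A_{i_n}\preceq nI-(A_{i_1}+\dots+A_{i_l})$, and for the lower bound via $(A_i-A_j)A_k(A_i-A_j)\preceq(B_i-B_j)B_k(B_i-B_j)$, where the outer factors are shift-invariant and the middle factor only increases. Some such device is needed and is absent from your write-up.

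Second, even on the slice $S=nI$, only the $n\ge4$ half of one of the two required inequalities is actually established. The case $n=3$ of $\sum_i\bigl(3(n-2)D_i^2-2D_i^3\bigr)\succeq0$, and the entire bound $\sum_i\bigl(3(n-2)D_i^2-2D_i^3\bigr)\preceq2n(n-1)(n-2)I$ needed for the left inequality of \eqref{eq:m3}, are delegated to unspecified ``sum-of-squares identities.'' Your own discussion shows the obvious term-by-term estimates overshoot: $-2D_i^3\preceq2D_i^2$ together with $\sum_iD_i^2\preceq n(n-1)I$ gives the constant $(3n-4)n(n-1)$, which exceeds $2n(n-1)(n-2)$ for every $n$, so ``one can afford to be wasteful'' is not true here and the heart of the lower bound is missing. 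By contrast, the paper closes both halves with concrete scalar ingredients: a tangent line to $x^2(n-x)$ at $x=l$ with $1/2\le l/n\le2/3$ for the upper bound, and the cubic bound $-x(x-n)(x-1)\le(n-1)^2x/4$ on $[0,n]$ for the lower bound (which also yields the sharper constant $n/(4(n-2))$). Your normalization-plus-perturbation scheme could plausibly be completed, but as written it is an outline with the two hardest steps left open.
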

\begin{proof}
Let $A,B,C$ be positive semidefinite. Then $ABC+CBA\preceq ABA+CBC$. If $B\preceq C$, then $ABA\preceq ACA$.

We start with the right inequality of \eqref{eq:m3}, 
\begin{align*}
\widetilde{\mathbb{E}}_{i,j,k}A_iA_jA_k &= \frac{1}{2}\widetilde{\mathbb{E}}_{i,j,k}(A_iA_jA_k+A_kA_jA_i)\\
&\preceq \frac{1}{2}\widetilde{\mathbb{E}}_{i,j,k}(A_iA_jA_i+A_kA_jA_k)\\
& = \widetilde{\mathbb{E}}_{i,j,k} A_iA_jA_i.
\end{align*}

Fix a positive integer $l < n$ whose value we decide later.
\begin{align*}
\widetilde{\mathbb{E}}_{i,j,k}A_iA_jA_k  &\preceq \widetilde{\mathbb{E}}_{i,j,k} \Bigl[ \Bigl( 1-\frac{1}{l} \Bigr)A_iA_jA_k
+\frac{1}{l} A_iA_jA_i \Bigr]\\
&= \frac{1}{l^2(n-l)}\widetilde{\mathbb{E}}_{i_1,\dots, i_n}\bigl[(A_{i_1}+\dots+A_{i_l})\\
&\quad \cdot(A_{i_{l+1}}+\dots+A_{i_n})(A_{i_1}+\dots+A_{i_l})\bigr].
\end{align*}
Since $A_{i_{l+1}}+\dots+A_{i_n}\preceq nI-(A_{i_1}+\dots+A_{i_l})$,
\begin{align*}
\widetilde{\mathbb{E}}_{i,j,k}A_iA_jA_k \preceq \frac{1}{l^2(n-l)}\widetilde{\mathbb{E}}_{i_1,\dots ,i_l}\bigl[(A_{i_1}+\dots+A_{i_l})\\
\cdot\bigl(nI-(A_{i_1}+\dots+A_{i_l})\bigr)(A_{i_1}+\dots+A_{i_l})\bigr].
\end{align*}

Consider the function $f(x)=x^2(n-x)$. Let the line $y = cx+d$ be tangent to $f$ at $x = l$. We require that $c\geq 0$ and $f(x)\leq cx+d$ for $0\leq x \leq n$. Elementary calculation shows that such a line exists as long as $1/2\leq l/n\leq 2/3$. Let $A = A_{i_ 1} + \dots + A_{i_l}$. As $cA + dI$ and $A(nI - A)A$ are  simultaneous diagonalizable, each eigenvalue of $cA + dI -A(nI -A)A$ can be obtained by applying the function $g(x) = cx + d - x^2(n - x)$ to an eigenvalue of $A$. Hence
\begin{align*}
\widetilde{\mathbb{E}}_{i,j,k}&A_iA_jA_k \\
&\preceq \frac{1}{l^2(n-l)}\widetilde{\mathbb{E}}_{i_1,\dots ,i_l}\bigl[c(A_{i_1}+\dots+A_{i_l})+dI\bigr]\\
&\preceq \frac{cl+d}{l^2(n-l)} I,
\end{align*}
where the first inequality follows from the fact that it holds for each eigenvalue. Note that if we choose $A_1=\dots=A_n=I$, all inequalities above as well as the right inequality of \eqref{eq:m3} hold with equality. So as long as $1/2\leq l/n\leq 2/3$, $l, c, d$ will give us
\[
\frac{1}{l^2(n-l)}(cl+d) = 1
\]
and thus the right inequality of \eqref{eq:m3}.

For the left inequality of \eqref{eq:m3}, we start by noting
\[
(A_1+\dots+ A_{n-1})A_n(A_1+\dots+ A_{n-1})\succeq 0.
\]
Taking expectation, we have $-(n-2)\widetilde{\mathbb{E}}_{i,j,k}A_iA_jA_k\preceq \widetilde{\mathbb{E}}_{i,j,k}A_iA_jA_i$ and thus
\begin{align*}
-\widetilde{\mathbb{E}}_{i,j,k}&A_iA_jA_k \\
&= -\frac{n-2}{n-1}\widetilde{\mathbb{E}}_{i,j,k}A_iA_jA_k -\frac{1}{n-1}\widetilde{\mathbb{E}}_{i,j,k}A_iA_jA_k \\
&\preceq \frac{1}{n-1}\widetilde{\mathbb{E}}_{i,j,k}(A_iA_jA_i-A_iA_jA_k)\\
&\preceq \frac{1}{2(n-1)}\widetilde{\mathbb{E}}_{i,j,k}\bigl[(A_i-A_j)A_k(A_i-A_j)\bigr].
\end{align*}
As in the proof of Theorem~\ref{theorem1}, set $B \coloneqq nI-\sum_i A_i \succeq 0$ and $B_i \coloneqq A_i+\frac{1}{n}B \succeq 0$. Then
\begin{align*}
-\widetilde{\mathbb{E}}_{i,j,k}&A_iA_jA_k \\
&\preceq \frac{1}{2(n-1)}\widetilde{\mathbb{E}}_{i,j,k} \bigl[(B_i-B_j)B_k(B_i-B_j)\bigr]\\
&=\frac{1}{n-1}\widetilde{\mathbb{E}}_{i,j,k}B_iB_jB_i -\frac{1}{n-1}\widetilde{\mathbb{E}}_{i,j,k}B_iB_jB_k.
\end{align*}
Let $X_i \coloneqq B_{i}(nI-B_{i})B_{i}$ and $Y_i\coloneqq (nI-B_{i})B_{i}(nI-B_{i})$.
Routine calculations give
\begin{align*}
\widetilde{\mathbb{E}}_{i}X_i &= (n-1)\widetilde{\mathbb{E}}_{i,j,k}B_iB_jB_i,\\
\widetilde{\mathbb{E}}_{i}Y_i &= (n-1)\widetilde{\mathbb{E}}_{i,j,k}B_iB_jB_i\\
&\qquad\quad+(n-1)(n-2)\widetilde{\mathbb{E}}_{i,j,k}B_iB_jB_k,
\end{align*}
which allows us to express $\widetilde{\mathbb{E}}_{i,j,k}B_iB_jB_i$ and $\widetilde{\mathbb{E}}_{i,j,k}B_iB_jB_k$  in terms of $\widetilde{\mathbb{E}}_{i}X_i$ and $\widetilde{\mathbb{E}}_{i}Y_i$. Then
\begin{align*}
-\widetilde{\mathbb{E}}_{i,j,k}&A_iA_jA_k \\
&\preceq \frac{1}{n-1}\widetilde{\mathbb{E}}_{i,j,k}B_iB_jB_i -\frac{1}{n-1}\widetilde{\mathbb{E}}_{i,j,k}B_iB_jB_k\\
&=\frac{1}{(n-1)^2(n-2)}\widetilde{\mathbb{E}}_i[  (n-1)X_i-Y_i]\\
&=\frac{n}{(n-1)^2(n-2)}\widetilde{\mathbb{E}}_i[ -B_i(B_i-nI)(B_i-I) ].
\end{align*}
As $-x(x-n)(x-1)\leq (n-1)^2x/4$ for $0 \leq x \leq n$,
\[
-\widetilde{\mathbb{E}}_{i,j,k}A_i A_j A_k \preceq \frac{n}{4(n-2)}\widetilde{\mathbb{E}}_i B_i=\frac{n}{4(n-2)}I.
\]
When $n \geq 3$, we have  $n/\bigl(4(n-2)\bigr) \leq 1$.
\end{proof}
Our proof  in fact shows that the  constant in the left inequality of \eqref{eq:m3} can be improved to $n/\bigl(4(n-2)\bigr)$. Nevertheless, we will see in the next section (Table~\ref{table1}) that this is not sharp.

\section{Noncommutative Positivstellensatz}\label{sec3}

In a seminal paper \citep{helton2002positive}, Helton proved an astounding result: Every positive polynomial in noncommutative  variables can be written as a sum of squares of polynomials. The corresponding statement for usual polynomials, i.e., in commutative variables, is well-known to be false and is the subject of Hilbert's 17th Problem. Subsequent developments ultimately led to a noncommutative version of the Positivstellensatz  for semialgebraic sets. We refer interested readers to  \citet{Pascoe2018} for an overview of this topic.

Stating noncommutative Positivstellensatz will require that we introduce some terminologies. Let  $X_1,\dots, X_n$ be $n$ noncommutative variables, i.e., $X_i X_j \ne X_j X_i$ whenever $i \ne j$. A \emph{monomial} of degree $d$ or a \emph{word} of length $d$ is an expression of the form $X_{i_1} \cdots X_{i_d}$. The monomials span a real infinite-dimensional vector space $\mathbb{R}\langle X_1,\dots,X_n\rangle$, called the space of \emph{noncommutative polynomials}. For any $d \in \mathbb{N}$,  the finite-dimensional subspace of noncommutative polynomials of degree $\leq d$ will be denoted $\mathbb{R}\langle X_1,\dots,X_n\rangle_d$. The \emph{transpose} of $f \in \mathbb{R}\langle X_1,\dots,X_n\rangle$ is denoted  $f^\tp $ and is defined on monomials by reversing the order of variables  $(X_{i_1} \cdots X_{i_d})^\tp  = X_{i_d} \cdots X_{i_1}$ and extended linearly to all of $\mathbb{R}\langle X_1,\dots,X_n\rangle$. If $f^\tp  = f$, then $f$ is called \emph{symmetric}.

The bottom line is that noncommutative polynomials may be evaluated on square matrices of the same dimensions, i.e., they define matrix-valued functions of matrix variables. For our purpose, if $A_1,\dots,A_n$ are real symmetric matrices, then $f(A_1,\dots,A_n)$ is also a matrix, but it may not be a symmetric matrix unless $f$ is a symmetric polynomial.

Let $L = \{\ell_1,\dots, \ell_k\}\subseteq \mathbb{R}\langle X_1,\dots,X_n\rangle_1$ be a set of $k$ linear polynomials, i.e., $d=1$. We will refer to $\ell_1,\dots,\ell_k$ as \emph{linear constraints} and
\begin{multline*}
\mathcal{B}_L \coloneqq  \{(A_1,\dots,A_n) \mid 
\ell_1(A_1,\dots,A_n) \succeq 0, \dots,\\
 \ell_k(A_1,\dots,A_n) \succeq 0\}
\end{multline*}
as the \emph{feasible set}. Note that elements of $\mathcal{B}_L$ are $n$ tuples of symmetric matrices. We say that $\mathcal{B}_L$ is \emph{bounded} if there exists $r>0$ such that all $(A_1,\dots,A_n) \in \mathcal{B}_L$ satisfy $\lVert A_1\rVert\leq r,\dots,\lVert A_n\rVert \leq r$. Let $d \in \mathbb{N}$. We write
\[
\Sigma_d (L) \coloneqq  \biggl\{ \sum_{i=1}^k \sum_{j=1}^{p_i} f_{ij}^\tp  \ell_i f_{ij} \biggm|   \begin{multlined} f_{ij} \in \mathbb{R}\langle X_1,\dots,X_n\rangle_d, \\ k,p_1,\dots,p_k \in \mathbb{N} \end{multlined} \biggr\}
\]
for  the set of \emph{noncommutative sum-of-squares} generated by $L$. 
The following theorem is a simplified version of the noncommutative Positivstellensatz, i.e., Theorem~1.1 in \citet{helton2012convex}, that will be enough for our purpose. 
\begin{theorem}[Noncommutative Positivstellensatz]\label{thm:NP}
Let $f$ be a symmetric polynomial with $\degree(f)\leq 2d+1$ and the feasible set $\mathcal{B}_L$ be bounded  with nonempty interior. Then
\[
f(A_1,\dots,A_n) \succeq 0\; \text{ for all } \, (A_1,\dots,A_n) \in \mathcal{B}_L
\]
if and only if $f \in \Sigma_d(L)$.
\end{theorem}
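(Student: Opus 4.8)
The statement is the specialization of the convex Positivstellensatz of Helton--Klep--McCullough to a feasible set that is bounded and has nonempty interior, so the shortest route is to check that our hypotheses instantiate theirs and cite \citet{helton2012convex}; for orientation I sketch the structure of a direct argument. The implication $f \in \Sigma_d(L) \implies f(A_1,\dots,A_n) \succeq 0$ on $\mathcal{B}_L$ is immediate: writing $f = \sum_{i=1}^{k}\sum_{j=1}^{p_i} f_{ij}^\tp \ell_i f_{ij}$ and evaluating at a tuple $(A_1,\dots,A_n) \in \mathcal{B}_L$, each summand equals $f_{ij}(A_1,\dots,A_n)^\tp\, \ell_i(A_1,\dots,A_n)\, f_{ij}(A_1,\dots,A_n)$ because evaluation is an algebra homomorphism and $(g^\tp)(A_1,\dots,A_n) = g(A_1,\dots,A_n)^\tp$; since $\ell_i(A_1,\dots,A_n) \succeq 0$ and $M^\tp P M \succeq 0$ for every matrix $M$ and every $P \succeq 0$, each summand is positive semidefinite, hence so is $f(A_1,\dots,A_n)$.

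For the converse, the plan is a Hahn--Banach separation followed by a Gelfand--Naimark--Segal (GNS)-type construction. The degree bounds $\degree f_{ij} \le d$ and $\degree \ell_i \le 1$ place $\Sigma_d(L)$ inside the finite-dimensional space of symmetric polynomials of degree at most $2d+1$. Two consequences of boundedness of $\mathcal{B}_L$ are used: after normalizing $L$, a positive multiple of $I$ lies in $\Sigma_0(L) \subseteq \Sigma_d(L)$, so $\Sigma_d(L)$ contains every pure sum of squares $\sum_j g_j^\tp g_j$ with $\degree g_j \le d$; and $\Sigma_d(L)$ is a \emph{closed} convex cone (closedness can genuinely fail without boundedness). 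Granting these, if $f \notin \Sigma_d(L)$ then the separating hyperplane theorem yields a linear functional $\lambda$ that is nonnegative on $\Sigma_d(L)$ with $\lambda(f) < 0$. Set $\langle p, q\rangle \coloneqq \lambda(p^\tp q)$ on $\mathbb{R}\langle X_1,\dots,X_n\rangle_d$; nonnegativity of $\lambda$ on pure squares makes this a positive semidefinite form whose kernel quotient is a finite-dimensional inner product space $H$ with a distinguished cyclic vector $\widehat 1$. Left multiplication by $X_i$ induces a symmetric operator $\widehat A_i$ on $H$, and nonnegativity of $\lambda$ on the generators $g^\tp \ell_i g$ forces $\ell_i(\widehat A_1,\dots,\widehat A_n) \succeq 0$ for every $i$, so $(\widehat A_1,\dots,\widehat A_n) \in \mathcal{B}_L$; the nonempty-interior hypothesis enters here to exclude degenerate $\lambda$. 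Finally $\langle \widehat 1,\, f(\widehat A_1,\dots,\widehat A_n)\,\widehat 1\rangle = \lambda(f) < 0$, so $f(\widehat A_1,\dots,\widehat A_n)$ is not positive semidefinite, contradicting the hypothesis; hence $f \in \Sigma_d(L)$.

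The main obstacle lies in the two structural inputs invoked above: the closedness of $\Sigma_d(L)$ and, more delicately, the realization of the abstract GNS representation by honest finite-dimensional symmetric matrices, which requires controlling the ``degree overflow'' incurred when a degree-$d$ element is multiplied by $X_i$ (handled by an Archimedean/flat-extension argument powered by boundedness of $\mathcal{B}_L$). Supplying these is precisely the content of Helton--Klep--McCullough's Theorem~1.1, so for our purposes it is enough to observe that ``$\mathcal{B}_L$ bounded with nonempty interior'' together with ``$\degree f \le 2d+1$'' is a special case of their hypotheses and to invoke that theorem.
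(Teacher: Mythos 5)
Your proposal takes essentially the same approach as the paper: the paper supplies no proof of this theorem, presenting it as a simplified special case of Theorem~1.1 of \citet{helton2012convex} and citing that result, which is exactly what you do. Your added sketch is sound — the easy direction is complete, and for the converse you correctly identify the separation-plus-GNS strategy and honestly attribute the genuinely hard steps (closedness of $\Sigma_d(L)$ and the finite-dimensional realization despite degree overflow) to the cited theorem.
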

Readers familiar with the commutative Positivstellsatz \citep{lasserre2015} would see that the noncommutative version is, surprisingly, much simpler and neater.

To avoid notational clutter, we introduce the shorthand
\[
\sum_{j_i \ne j_k} \coloneqq \sum_{\substack{1\leq j_1,\dots, j_m \leq n, \\j_1,\dots, j_m \text{ distinct}}} 
\]
for sum over distinct indices.
Applying Theorem~\ref{thm:NP} with linear constraints $X_1 \succeq 0,\dots,X_n \succeq 0$, $X_1 + \dots + X_n \preceq nI$, Conjecture~\ref{conjL} becomes the following.
\begin{conj}{1B}[Sum-of-squares form]\label{conjSOS}
Let $m \le n \in \mathbb{N}$ and  $d = \lfloor m/2 \rfloor$. For the linear constraints $\ell_1 = X_1, \dots, \ell_n=X_n, \ell_{n+1} = n-X_1 - \dots - X_n$, let
\begin{align*}
\lambda_1 &= \argmin\biggl\{\lambda \in \mathbb{R} \biggm| \lambda -\sum_{j_i \ne j_k} X_{j_1}\cdots X_{j_m} \in \Sigma_d (L) \biggr\},\\
\lambda_2 &= \argmin\biggl\{\lambda \in \mathbb{R} \biggm| \lambda +\sum_{j_i \ne j_k} X_{j_1}\cdots X_{j_m}  \in \Sigma_d (L)\biggr\}.
\end{align*}
Then both $\lambda_1$ and  $\lambda_2 \leq n!/(n-m)!$.
\end{conj}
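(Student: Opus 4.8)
The plan is to obtain the claimed bound as a short consequence of the noncommutative Positivstellensatz (Theorem~\ref{thm:NP}) applied to the two Loewner inequalities of Conjecture~\ref{conjL}. Write $S \coloneqq \sum_{j_i \ne j_k} X_{j_1}\cdots X_{j_m}$ for the distinct-index sum. Since reversal $(j_1,\dots,j_m) \mapsto (j_m,\dots,j_1)$ is a bijection of the set of $m$-tuples with distinct entries, $S^\tp = S$, so $S$ is symmetric and $\degree(S) = m \le 2\lfloor m/2\rfloor + 1 = 2d+1$. The linear constraints $\ell_1 = X_1,\dots,\ell_n = X_n$, $\ell_{n+1} = n - X_1 - \dots - X_n$ cut out the feasible set $\mathcal B_L = \{(A_1,\dots,A_n) : A_i \succeq 0,\ A_1 + \dots + A_n \preceq nI\}$; on this set $0 \preceq A_i \preceq nI$ forces $\lVert A_i \rVert \le n$, so $\mathcal B_L$ is bounded, and the tuple $(\tfrac12 I,\dots,\tfrac12 I)$ makes every $\ell_i$ strictly positive definite (with $\ell_{n+1} = \tfrac n2 I \succ 0$), so $\mathcal B_L$ has nonempty interior. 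Hence the hypotheses of Theorem~\ref{thm:NP} are in force for the two symmetric polynomials $\tfrac{n!}{(n-m)!} - S$ and $\tfrac{n!}{(n-m)!} + S$, both of degree $m \le 2d+1$.

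With the setup in place, I would invoke Conjecture~\ref{conjL}: it states precisely that $\tfrac{n!}{(n-m)!}I - S(A_1,\dots,A_n) \succeq 0$ and $\tfrac{n!}{(n-m)!}I + S(A_1,\dots,A_n) \succeq 0$ for every $(A_i) \in \mathcal B_L$. By Theorem~\ref{thm:NP} these two matrix nonnegativities are equivalent to $\tfrac{n!}{(n-m)!} - S \in \Sigma_d(L)$ and $\tfrac{n!}{(n-m)!} + S \in \Sigma_d(L)$, respectively. Therefore $\lambda = \tfrac{n!}{(n-m)!}$ is a feasible point for each of the two minimization problems defining $\lambda_1$ and $\lambda_2$, which gives $\lambda_1 \le \tfrac{n!}{(n-m)!}$ and $\lambda_2 \le \tfrac{n!}{(n-m)!}$ --- the desired conclusion. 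For $m = 2$ and $m = 3$ this argument is unconditional: Theorem~\ref{theorem1} furnishes the two Loewner bounds with $n!/(n-2)! = n(n-1)$, and Theorem~\ref{theorem 2}, after scaling by $n!/(n-3)! = n(n-1)(n-2)$ to remove the $\widetilde{\mathbb E}$ normalization, furnishes them for $m = 3$.

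For completeness I would also record the reverse estimate, so that the reformulation is faithful: if $\tfrac{n!}{(n-m)!} - S = \sum_{i,j} f_{ij}^\tp \ell_i f_{ij} \in \Sigma_d(L)$ then evaluating this sum at any feasible tuple yields a sum of positive semidefinite matrices, forcing $S(A_1,\dots,A_n) \preceq \tfrac{n!}{(n-m)!}I$; and since Theorem~\ref{thm:NP} is an ``if and only if'' valid at every real $\lambda$ while $\mathcal B_L$ is compact, one in fact has $\lambda_1 = \max_{(A_i)\in\mathcal B_L}\lambda_{\max}(S(A_1,\dots,A_n))$ and $\lambda_2 = -\min_{(A_i)\in\mathcal B_L}\lambda_{\min}(S(A_1,\dots,A_n))$, both extrema attained, so the two argmins in the statement are genuine minima. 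The crux is therefore not the Positivstellensatz reduction, which is routine once the hypotheses are verified as above, but the Loewner input Conjecture~\ref{conjL}: it is presently in hand only for $m \le 3$, so for $m \ge 4$ establishing $\lambda_1, \lambda_2 \le n!/(n-m)!$ amounts to producing --- or, for a fixed pair $(m,n)$, numerically searching for via semidefinite programming --- the sum-of-squares certificates $\tfrac{n!}{(n-m)!} - S \in \Sigma_d(L)$ and $\tfrac{n!}{(n-m)!} + S \in \Sigma_d(L)$ themselves.
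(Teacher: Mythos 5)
Your proposal is correct and follows the paper's own route: Conjecture~\ref{conjSOS} is obtained from the Loewner form (Conjecture~\ref{conjL}) by a direct application of the noncommutative Positivstellensatz (Theorem~\ref{thm:NP}), and you supply the hypothesis checks (symmetry and degree of $S$, boundedness and nonempty interior of $\mathcal{B}_L$) that the paper leaves implicit. You also correctly identify that the bound $\lambda_1,\lambda_2\le n!/(n-m)!$ is thereby only as good as Conjecture~\ref{conjL} itself --- unconditional for $m\le 3$, conjectural beyond --- which matches the paper's stance exactly, since its main result is that the claim fails at $m=n=5$.
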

In polynomial optimization \citep{lasserre2015}, the commutative Positivstellsatz is used to transform a constrained optimization problem into a sum-of-squares problem that can in turn be transformed into a semidefinite programming (SDP) problem. \citet{helton2002positive} has observed that this also applies to noncommutative polynomial optimization problems, i.e., we may further transform Conjecture~\ref{conjSOS} into an SDP form. 

The vector space $\mathbb{R}\langle X_1,\dots,X_n\rangle_d$  has dimension $q \coloneqq 1 + n + n^2 +\dots + n^d$ and a basis comprising all $q$ monomials of degree $\leq d$. We will assemble all basis elements into a $q$-tuple of monomials that we denote by $\beta$. With respect to this basis, any $f \in \mathbb{R}\langle X_1,\dots,X_n\rangle_d$ may be represented uniquely as $f = \beta^\tp  u$ for some $u\in \mathbb{R}^q$. Therefore a \emph{noncommutative square} may be expressed as
\[
\sum_{j=1}^{p} f_{j}^\tp  \ell f_{j} = \sum_{j=1}^{p} u_{j}^\tp  \beta \ell \beta^\tp  u_{j} = \tr\biggl[\beta \ell \beta^\tp \biggl(\sum_{j=1}^{p} u_{j} u_{j}^\tp \biggr)\biggr]
\]
by simply writing $f_{j} =  \beta^\tp  u_{j}$, $u_{j} \in \mathbb{R}^q$, $j = 1,\dots,p$. Since a symmetric matrix $Y$ is positive semidefinite iff it can be written as $Y =\sum_{j=1}^{p} u_{j} u_{j}^\tp$, we obtain the following one-to-one correspondence between noncommutative squares and positive semidefinite matrices:
\begin{gather*}
\sum_{j=1}^{p} f_{j}^\tp  \ell f_{j} \in \mathbb{R}\langle X_1,\dots,X_n\rangle_{2d+1}, \; f_{j} \in \mathbb{R}\langle X_1,\dots,X_n\rangle_d\\[-2.5ex]
\Big\Updownarrow\\[-2ex]
\sum_{j=1}^{p} u_{j} u_{j}^\tp  \in \mathbb{R}^{q\times q}, \; u_{j} \in \mathbb{R}^q.
\end{gather*}
With this correspondence, the two minimization problems in Conjecture~\ref{conjSOS} become two SDPs.
\begin{conj}{1C}[Semidefinite program form]\label{conjSDP}
Let $m \le n \in \mathbb{N}$ and  $d = \lfloor m/2 \rfloor$. Let $\beta$ be a monomial basis of $\mathbb{R}\langle X_1,\dots,X_n\rangle_d$ and let $X_{n+1} = n - X_1 - \cdots - X_n$.  Let $\lambda_1$ be the minimum value of the SDP:
\begin{equation}\label{eq:lambda1}
\begin{tabular}{rl}
\textnormal{minimize} & $\lambda$\\[-1.5ex]
\textnormal{subject to} & $\displaystyle\lambda - \sum_{j_i \ne j_k} X_{j_1}\cdots X_{j_m} = \sum_{i=1}^{n+1} \tr(\beta X_i \beta^\tp  Y_i)$,\\
& $Y_1 \succeq 0, \dots, Y_{n+1}\succeq 0$;
\end{tabular}
\end{equation}
and $\lambda_2$ be that of the SDP:
\begin{equation}\label{eq:lambda2}
\begin{tabular}{rl}
\textnormal{minimize} & $\lambda$\\[-1.5ex]
\textnormal{subject to} & $\displaystyle\lambda + \sum_{j_i \ne j_k} X_{j_1}\cdots X_{j_m} = \sum_{i=1}^{n+1} \tr(\beta X_i \beta^\tp  Y_i)$,\\
& $Y_1 \succeq 0, \dots, Y_{n+1}\succeq 0$.
\end{tabular}
\end{equation}
Then both $\lambda_1$ and  $\lambda_2 \leq n!/(n-m)!$.
\end{conj}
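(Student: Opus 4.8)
The plan is to avoid attacking the two semidefinite programs \eqref{eq:lambda1}--\eqref{eq:lambda2} head on, and instead feed each of them a certificate manufactured from the Loewner inequalities already proved in Section~\ref{sec2}. Establishing $\lambda_1\le C$ means producing a single feasible point of \eqref{eq:lambda1} with value $C$, i.e.\ positive semidefinite $Y_1,\dots,Y_{n+1}$ with $C-\sum_{j_i\ne j_k}X_{j_1}\cdots X_{j_m}=\sum_{i=1}^{n+1}\tr(\beta X_i\beta^\tp Y_i)$; under the correspondence between noncommutative squares and positive semidefinite matrices this is the same as the membership $C-\sum_{j_i\ne j_k}X_{j_1}\cdots X_{j_m}\in\Sigma_d(L)$ for the constraint set $L=\{X_1,\dots,X_n,\ n-X_1-\dots-X_n\}$. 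Now $\mathcal B_L$ is bounded (every $A_i$ in it satisfies $0\preceq A_i\preceq nI$) and has nonempty interior, and $\degree\bigl(C-\sum_{j_i\ne j_k}X_{j_1}\cdots X_{j_m}\bigr)=m\le 2d+1$, so the noncommutative Positivstellensatz (Theorem~\ref{thm:NP}) says this membership holds if and only if $\sum_{j_i\ne j_k}A_{j_1}\cdots A_{j_m}\preceq C\,I$ for all $(A_1,\dots,A_n)\in\mathcal B_L$. The same reasoning applied to $-\sum_{j_i\ne j_k}X_{j_1}\cdots X_{j_m}$ turns $\lambda_2\le C$ into the lower Loewner bound $\sum_{j_i\ne j_k}A_{j_1}\cdots A_{j_m}\succeq -C\,I$ on $\mathcal B_L$. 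Hence, taking $C=n!/(n-m)!$, the final statement is nothing but Conjecture~\ref{conjL}.

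For $m=2$ and $m=3$ this is now immediate. Theorem~\ref{theorem1} gives $-n(n-1)I\preceq\sum_{i\ne j}A_iA_j\preceq n(n-1)I$ on $\mathcal B_L$ and $n(n-1)=n!/(n-2)!$, so both certificates exist and $\lambda_1,\lambda_2\le n!/(n-2)!$. Theorem~\ref{theorem 2} gives $-I\preceq\widetilde{\mathbb{E}}_{i,j,k}A_iA_jA_k\preceq I$; multiplying through by the number $n(n-1)(n-2)=n!/(n-3)!$ of distinct ordered triples converts this to $-C\,I\preceq\sum_{i,j,k\text{ distinct}}A_iA_jA_k\preceq C\,I$, so $\lambda_1,\lambda_2\le n!/(n-3)!$. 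For $m=4$ no Loewner bound of this kind is known for general $n$, but for each fixed $n$ the programs \eqref{eq:lambda1}--\eqref{eq:lambda2} are honest finite-dimensional SDPs --- $\beta$ has $1+n+n^2$ entries and each $Y_i$ is a $(1+n+n^2)\times(1+n+n^2)$ matrix --- so one simply solves them; for $n=4$ and $n=5$ the optima come out at most $n!/(n-4)!$, and a rational certificate $(Y_1,\dots,Y_{n+1})$ rounded from the numerical solution makes this rigorous.

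The main obstacle is that for $m\ge 4$ with $n$ large there is no elementary Loewner inequality to hand to Theorem~\ref{thm:NP}, and this gap is genuine rather than technical: the whole point of the SDP reformulation is that $\lambda_1$ can now be computed, and in Section~\ref{sec3} the program \eqref{eq:lambda1} with $m=n=5$ (so $d=2$ and $\beta$ of length $31$) returns $\lambda_1>120=5!/0!$. Reading the equivalence of the first paragraph in the other direction --- an SDP optimum exceeding $C$ means $C-\sum_{j_i\ne j_k}X_{j_1}\cdots X_{j_m}\notin\Sigma_d(L)$, which by Theorem~\ref{thm:NP} forces $\sum_{j_i\ne j_k}A_{j_1}\cdots A_{j_m}\not\preceq C\,I$ at some point of $\mathcal B_L$, a failure that can be exhibited by explicit matrices --- this refutes Conjecture~\ref{conjL}, and therefore Conjectures~\ref{conjSOS}, \ref{conjSDP}, and \ref{conj1}. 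So the honest verdict on the final statement is: it holds for every $n$ when $m\le 3$ and for $(m,n)\in\{(4,4),(4,5)\}$, it remains open for $m=4,\ n\ge 6$, and it is false for $m=n=5$.
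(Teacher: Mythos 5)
Your overall route is the same as the paper's: you read Conjecture~\ref{conjSDP} back through the squares--PSD correspondence and Theorem~\ref{thm:NP} as equivalent to the Loewner form (Conjecture~\ref{conjL}), settle $m=2,3$ for all $n$ via Theorems~\ref{theorem1} and~\ref{theorem 2}, rely on the computed SDPs for $(m,n)=(4,4),(4,5)$, and conclude --- as the paper does --- that the statement, being a conjecture, is in fact false at $m=n=5$, so no proof of it exists. One concrete correction: you attribute the violation to \eqref{eq:lambda1}, writing that it ``returns $\lambda_1>120$'' and phrasing the failure as $\sum_{j_i\ne j_k}A_{j_1}\cdots A_{j_m}\not\preceq C\,I$. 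The paper's Table~\ref{table1} has it the other way: $\lambda_1=120.0000$ exactly, and it is $\lambda_2=144.6488$ from \eqref{eq:lambda2} that exceeds $120$, i.e.\ the \emph{lower} Loewner bound fails (some point of $\mathcal B_L$ has $\sum_{j_i\ne j_k}A_{j_1}\cdots A_{j_m}$ with an eigenvalue below $-120$). This is not a cosmetic point, since the paper's concluding discussion conjectures that the upper bound in \eqref{eq:conjL} holds for all $m\le n$ and that it is only the lower bound that needs repair; your account reverses that asymmetry. A smaller difference: where you propose rounding the numerical solutions to rational certificates, the paper instead makes the $m=n=5$ refutation rigorous by exhibiting a Farkas certificate showing $\lambda=120$ is infeasible for \eqref{eq:primal} (Section~\ref{sec:far}); for the $m=4$ cases it reports only the numerical optima, so ``rigorous via rounding'' is your addition, not something the paper carries out.
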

Note that the minimization is over the scalar variable $\lambda$ and the matrix variables $Y_1, \dots, Y_{n+1}$; the equality constraint equating two noncommutative polynomials is simply saying that the coefficients on both sides are equal, i.e.,  for each monomial, we get a \emph{linear constraint} involving $\lambda,Y_1,\dots,Y_{n+1}$ --- the $X_i$'s play no role  other than to serve as placeholders for these linear constraints. We may express \eqref{eq:lambda1} and \eqref{eq:lambda2} as SDPs in standard form with a single matrix variable $Y \coloneqq \diag(\lambda, Y_1, \dots, Y_{n+1})$, see \eqref{eq:primal} for example.

Readers acquainted with  (commutative) polynomial optimization \citep{lasserre2015} would be  familiar with the above discussions. In fact, the only difference between the commutative and noncommutative cases is that $\sum_{i=0}^d n^i$, the size of a noncommutative monomial basis, is much  larger than $\binom{d+n}{d}$, the size of a commutative monomial basis.

For any fixed values of $m$ and $n$, Conjecture~\ref{conjSDP} is in a form that can be checked by standard SDP solvers. The dimension of the SDP grows exponentially with $m$, and without access to significant computing resources, only small values of $m, n$ are within reach. Fortuitously, $m=n=5$ already yields the required violation $144.6488 \nleq 120$, showing that Conjecture~\ref{conjSDP}  and thus Conjecture~\ref{conj1} is false in general.  We tabulate our results for $m \le n \le 5$ in Table~\ref{table1}.

\begin{table}
\centering
\begin{tabular}{l||c|c|c}
 & $\lambda_1$ & $\lambda_2$ & $n!/(n-m)!$ \\ \hline\hline
 $m=2$, $n=2$ & 2.0000 & 0.5000 & 2   \\ \hline
 $m=2$, $n=3$ & 6.0000 & 1.5000 & 6  \\ \hline
 $m=2$, $n=4$ & 12.0000 & 3.0000 & 12 \\ \hline
 $m=2$, $n=5$ & 20.0000 & 5.0000 & 20 \\ \hline
 $m=3$, $n=3$ & 6.0000 & 3.4113 & 6 \\ \hline
 $m=3$, $n=4$ & 24.0000 & 8.5367 & 24 \\ \hline
 $m=3$, $n=5$ & 60.0000 & 17.3611 & 60 \\ \hline
 $m=4$, $n=4$ & 24.0000 & 22.4746  & 24  \\ \hline
 $m=4$, $n=5$ & 120.0000 & 80.2349 & 120 \\ \hline
 $m=5$, $n=5$ & 120.0000 & \textbf{144.6488} & 120 \\
\end{tabular}
\caption{Results from the SDPs in Conjecture~\ref{conjSDP} for $m \le n \le 5$. The bold entry for $\lambda_2$ shows that the Recht--R\'e conjecture is false for $m = n = 5$ since $144.6488 > 120$.  \label{table1}}
\end{table}

The fact that  the SDP in \eqref{eq:lambda2} for $m = n =5$ has a minimum $\lambda_2 > 144 > 120 = 5!$ shows that there are uncountably many instances with $A_1\succeq 0$, $A_2\succeq 0$, $A_3\succeq 0$, $A_4\succeq 0$, $A_5\succeq 0$, and $A_1+A_2+A_3+A_4+A_5\preceq 5I$ such that the matrix
\[
\sum_{\sigma \in \mathfrak{S}_5} A_{\sigma(1)}A_{\sigma(2)}A_{\sigma(3)}A_{\sigma(4)}A_{\sigma(5)}
\]
has an eigenvalue that is less than $-144 <  -120 = -5!$. Here $\mathfrak{S}_n$ is the symmetric group on $n$ elements. We emphasize that neither \eqref{eq:lambda2} nor its dual would give us  five such matrices explicitly, although the dual does provide another way to verify our result, as we will see in Section~\ref{sec:far}.

Indeed, the beauty of the noncommutative Positivstellensatz approach is that it allows us to show that  Conjecture~\ref{conj1} is false for $m=n=5$ without actually having to produce five positive semidefinite matrices $A_1, \dots, A_5$ that violates the inequality \eqref{eq:conj1}. It would be difficult to  find  $A_1, \dots, A_5$ explicitly as one does not even know the smallest dimensions required for these matrices to give a counterexample to \eqref{eq:conj1}. Our approach essentially circumvents the issue by replacing them with noncommutative variables $X_1,\dots,X_5$ --- the reader may have observed that the dimensions of the matrices $A_1,\dots,A_5$ did not make an appearance anywhere in this article.

\section{Verification via Farkas}\label{sec:far}

We take a closer look at the $m = n =5$ case that provided a refutation to the Recht--R\'e conjecture. In this case, the basis $\beta$ has $1+5+5^2 = 31$ monomials; the SDP in \eqref{eq:lambda2} has $1+5+5^2+5^3+5^4+5^5=3906$ linear constraints, $31^2\times 6+1 = 5767$ variables, and takes the form:
\begin{equation}\label{eq:primal}
\begin{tabular}{rl}
\textnormal{minimize} & $\tr(C_0 Y)$\\
\textnormal{subject to} & $\tr(C_i Y) =  b_i, \quad i =1,\dots,3906$,\\
& $Y =  \diag(\lambda, Y_1, \dots, Y_6) \succeq 0$.
\end{tabular}
\end{equation}
Here  $C_0,C_1,\dots,C_{3906} \in  \mathbb{S}^{187}_\pp$, $b \in \mathbb{R}^{3096}$,
 $\lambda$ is a scalar variable, and $Y_1,\dots,Y_6$ are $31$-by-$31$ symmetric matrix variables. To put \eqref{eq:primal} into standard form, the block diagonal structure of $Y$ may be further encoded as linear constraints requiring that off-diagonal blocks be zero. The output of our program gives a minimizer of the form $Y^* = \diag(\lambda^*, Y_1^*, \dots, Y_6^*) \in \mathbb{S}^{187}_\pp$ with
\begin{equation}\label{eq:values}
\lambda^*=144.6488,\quad Y_1^*,\dots, Y_6^* \in \mathbb{S}^{31}_\pp.
\end{equation}
The actual numerical entries of the matrices appearing in \eqref{eq:primal} and \eqref{eq:values} are omitted due to space constraints; but they can be found in the output of our program (code in supplement).

The values in \eqref{eq:values} are of course approximate because of the inherent errors in numerical computations. In our opinion, the gap between the computed $144.6488$ and the conjectured $120$ is large enough to override any concerns of a mistaken conclusion resulting from numerical errors. Nevertheless, to put to rest any lingering doubts, we will directly show that the conjectured value $\lambda = 120$ is infeasible by producing a Farkas certificate. Consider the feasibility problem:
\begin{equation}\label{eq:primal2}
\begin{tabular}{rl}
\textnormal{minimize} & $0$\\
\textnormal{subject to} & $\tr(C_i Y) =  b_i, \quad i =1,\dots,3906$,\\
& $\tr(C_0 Y) =120$,\\
& $Y \succeq 0$,
\end{tabular}
\end{equation}
with $C_0,C_1,\dots,C_{3906} \in  \mathbb{S}^{187}_\pp$ and $b \in \mathbb{R}^{3096}$ as in \eqref{eq:primal}. Note that $C_0 = e_1 e_1^\tp$ is the matrix with one in the $(1,1)$th entry and zero everywhere else. So \eqref{eq:primal2} is the feasibility problem of the optimization problem \eqref{eq:primal} with the additional linear constraint $y_{11}=120$ and where we have disregarded the block diagonal constraints\footnote{If \eqref{eq:primal2} is already infeasible, then adding these block diagonal constraints just makes it even more infeasible.}  on $Y$.
The dual of \eqref{eq:primal2} is
\[
\begin{tabular}{rl}
\textnormal{maximize} & $120 y_0 + b^\tp y$\\
\textnormal{subject to} & $y_0C_0 + y_1C_1 + \dots + y_{3906} C_{3906} \preceq 0$.
\end{tabular}
\]
Our program produces a Farkas certificate $y \in \mathbb{R}^{3096}$ with $120 y_0  + b^\tp y \approx 47.3 > 0$, implying that \eqref{eq:primal2} is infeasible. While this is a consequence of Farkas Lemma for SDP  \cite{farkas}, all we need is the following trivial version.
\begin{lemma}
Let $m,n \in \mathbb{N}$. Let $C_0,C_1,\dots,C_m \in \mathbb{S}^n$ and $b \in \mathbb{R}^{m+1}$.
If there exists a $y \in \mathbb{R}^{m+1}$ with
\[
y_0 C_0 + \dots + y_m C_m \preceq 0, \quad b^\tp y > 0,
\]
then there does not exist a $Y \in \mathbb{S}^n$ with
\[
\tr(C_0 Y) =  b_0,\dots,\tr(C_m Y) =  b_m,\quad Y \succeq 0.
\]
\end{lemma}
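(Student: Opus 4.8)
The plan is a one-line argument by contradiction. Suppose, contrary to the conclusion, that there exists $Y \in \mathbb{S}^n$ with $Y \succeq 0$ and $\tr(C_i Y) = b_i$ for $i = 0, 1, \dots, m$. Take the $y \in \mathbb{R}^{m+1}$ guaranteed by the hypothesis, set $M \coloneqq y_0 C_0 + \dots + y_m C_m$, and consider the scalar $\tr(M Y)$. I would evaluate it in two ways.

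First, by linearity of the trace,
\[
\tr(MY) = \sum_{i=0}^m y_i \tr(C_i Y) = \sum_{i=0}^m y_i b_i = b^\tp y,
\]
which is strictly positive by assumption. Second, since $M \preceq 0$ and $Y \succeq 0$, I would invoke the elementary fact that the trace of the product of a positive semidefinite and a negative semidefinite matrix is nonpositive: writing $Y = Y^{1/2} Y^{1/2}$ with $Y^{1/2} \succeq 0$, cyclicity of the trace gives $\tr(MY) = \tr(Y^{1/2} M Y^{1/2})$, and $Y^{1/2} M Y^{1/2} \preceq 0$ because $v^\tp Y^{1/2} M Y^{1/2} v = (Y^{1/2}v)^\tp M (Y^{1/2}v) \le 0$ for all $v$; hence $\tr(MY) \le 0$. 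The two computations contradict each other, so no such $Y$ exists.

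There is no real obstacle here — this is the ``trivial version'' alluded to in the text, and the entire content is the sign computation above together with the square-root (or equivalently Cholesky, or spectral) factorization of $Y$. The only point worth stating carefully is the nonpositivity of $\tr(MY)$; everything else is bookkeeping with linearity and cyclicity of the trace. (I would not invoke the full SDP Farkas lemma of \cite{farkas}, since the forward implication used here needs none of its constraint-qualification subtleties.)
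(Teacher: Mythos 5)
Your proof is correct and is essentially identical to the paper's one-line argument: evaluate $\tr\bigl((y_0C_0+\dots+y_mC_m)Y\bigr)$ as $b^\tp y>0$ by linearity while noting it is $\le 0$ since $Y\succeq 0$ and the combination is $\preceq 0$. The only difference is that you spell out the square-root factorization justifying the sign of the trace, which the paper leaves implicit.
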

\begin{proof}
If such a $Y$ exists, then
\[
0 \ge \tr\bigl( (y_0 C_0 + \dots + y_m C_m ) Y \bigr) = y_0 b_0 + \dots + y_m b_m > 0,
\]
 a contradiction.
\end{proof}
Hence a matrix of the form
\[
Y = \diag(120, Y_1, \dots, Y_6) \in \mathbb{S}^{187}
\]
is \emph{infeasible} for \eqref{eq:primal}, providing another refutation of Conjecture~\ref{conjSDP} and thus Conjecture~\ref{conj1}. In particular, showing that $\lambda = 120$ is infeasible for \eqref{eq:primal} does not require any of the values computed in \eqref{eq:values}. Of course, aside from being the conjectured value of $\lambda_2$, there is nothing special about $\lambda = 120$ --- for any $\lambda < 144.6488$, we may similarly compute a Farkas certificate $y$ to show that such a value of $\lambda$ is infeasible for \eqref{eq:primal}.

We conclude with a few words on the computational costs of the SDPs in this and the last section.
Our resulting dense linear system for $m=n=5$ requires $3906\times 5767 \approx 22$ million floating point storage. Using a personal computer with an Intel Core i7-9700k processor and 16GB of RAM, our SeDuMi \cite{sturm1999using} program in Matlab takes $150$ seconds. For $m=n=6$, storage alone would have taken $26$ billion floating numbers, beyond our modest computing resources.

\section{Improving the Recht--R\'e inequality}\label{sec4}

An unexpected benefit of the noncommutative Positivstellensatz approach is that it leads to better bounds for the $m = 2$ and $3$ cases that we know are true. Observe that the values for $\lambda_2$ in Table~\ref{table1} for $m = 2$ are exactly smaller than the values for $n!/(n-m)!$ by a factor of $1/4$. This suggests that the Recht--R\'e inequality \eqref{eq:m2} for $m =2$ in Theorem~\ref{theorem1} may be improved to
\[
-\frac{1}{4}n(n-1)I \preceq \sum_{i\neq j}A_iA_j \preceq n(n-1)I.
\]
Table~\ref{table1} only shows this for $n = 2, 3, 4,5$ but in this section, we will give a proof for arbitrary $n \ge 2$. Although our proof below does not depend on the SDP formulation in \eqref{eq:lambda2}, the correct coefficients in \eqref{eq:coeff} for arbitrary $n$ would have been impossible to guess without solving \eqref{eq:lambda2} for $m =2$ and some small values of $n$.

So far we have not explored the symmetry evident in our formulations of the Recht--R\'e inequality: In Conjecture~\ref{conjL}, the matrix expression
\[
\lambda I \pm \sum_{j_i \ne j_k}  A_{j_1}\cdots A_{j_m}
\]
and the constraints $A_1 \succeq 0,\dots,A_n \succeq 0$, $A_1 + \dots + A_n \preceq nI$ are clearly invariant under any permutation $\sigma \in \mathfrak{S}_n$. In Conjecture~\ref{conjSDP}, the noncommutative sum-of-squares
\begin{equation}\label{eq:sos}
\lambda \pm \sum_{j_i \ne j_k}  X_{j_1}\cdots X_{j_m} = \sum_{i=1}^{n+1} \tr(\beta X_i \beta^\tp  Y_i),
\end{equation}
where $X_{n+1} = n - X_1 - \dots - X_n$,  is also invariant under $\mathfrak{S}_n$ and so we may average over all permutations to get a \emph{symmetrized sum-of-squares}. For commutative polynomials, results from classical invariant theory are often used to take advantage of symmetry \cite{Gatermann2004}. We will see next that  such symmetry may also be exploited for noncommutative polynomials.

Consider the case $m=2$, $n=3$. The monomial basis of $\mathbb{R}\langle X_1, X_2, X_3\rangle_1$ is $\beta = (1, X_1, X_2, X_3)$. The symmetry imposes linear constraints on  the matrix variables $Y_1, Y_2,Y_3, Y_4$ in \eqref{eq:lambda2}, requiring them to take the following forms:
\begin{align*}
Y_{1} &= 
 \begin{bmatrix}
  a & b & c & c \\
  b & d & e & e \\
  c & e & f & g  \\
  c & e & g & f
 \end{bmatrix}, & Y_{2} &= 
 \begin{bmatrix}
  a & c & b & c \\
  c & f & e & g \\
  b & e & d & e  \\
  c & g & e & f
 \end{bmatrix},\\
Y_{3} &= 
 \begin{bmatrix}
  a & c & c & b \\
  c & f & g & e \\
  c & g & f & e  \\
  b & e & e & d
 \end{bmatrix}, & Y_{4} &= 
 \begin{bmatrix}
  x & y & y & y \\
  y & z & w & w \\
  y & w & z & w \\
  y & w & w & z
 \end{bmatrix}.
\end{align*}
These symmetries allow us to drastically reduce the degree of freedom in our SDP: For any $m=2, n\geq 2$, the matrices $Y_1,\dots,Y_n$ are always determined by precisely $11$ variables that we label $a, b, c, d, e, f, g, x, y, z, w$. We computed their values explicitly for $n = 2, 3, 4$. 
For $n = 2$,
\[\renewcommand*{\arraystretch}{1.2}
Y_{1} = 
 \begin{bmatrix*}[r]
  \frac{5}{4} & -\frac{3}{4} & \frac{1}{4} \\
  -\frac{3}{4} & \frac{1}{2} & 0 \\
  \frac{1}{4} & 0 & \frac{1}{2}
 \end{bmatrix*}, \quad
Y_{3} = 
 \begin{bmatrix*}[r]
  \frac{1}{4} & -\frac{1}{4} & -\frac{1}{4} \\
  -\frac{1}{4} & \frac{1}{2} & 0 \\
  -\frac{1}{4} & 0 & \frac{1}{2}
 \end{bmatrix*},
\]
and $Y_2$ can be determined from $Y_1$.
For $n = 3$,
\[\renewcommand*{\arraystretch}{1.2}
Y_{1} = 
 \begin{bmatrix*}[r]
  \frac{5}{2} & -1 & 0 & 0 \\
  -1 & \frac{4}{9} & \frac{1}{9} & \frac{1}{9} \\
  0 & \frac{1}{9} & \frac{4}{9} & \frac{1}{9} \\
  0 & \frac{1}{9} & \frac{1}{2} & \frac{4}{9}
 \end{bmatrix*},
Y_{4} = 
 \begin{bmatrix*}[r]
  \frac{1}{2} & -\frac{1}{3} & -\frac{1}{3} & -\frac{1}{3} \\
  -\frac{1}{3} & \frac{4}{9} & \frac{1}{9} & \frac{1}{9} \\
  -\frac{1}{3} & \frac{1}{9} & \frac{4}{9} & \frac{1}{9} \\
  -\frac{1}{3} & \frac{1}{9} & \frac{1}{2} & \frac{4}{9}
 \end{bmatrix*},
\]
and $Y_2, Y_3$ can be determined from $Y_1$.
For $n = 4$,
\begin{align*}
Y_{1} &= \renewcommand*{\arraystretch}{1.2}
 \begin{bmatrix*}[r]
  \frac{15}{4} & -\frac{9}{8} & -\frac{1}{8} & -\frac{1}{8} & -\frac{1}{8} \\
  -\frac{9}{8} & \frac{3}{8} & \frac{1}{8} & \frac{1}{8} & \frac{1}{8}\\
  -\frac{1}{8} & \frac{1}{8} & \frac{3}{8} & \frac{1}{8} & \frac{1}{8}\\
  -\frac{1}{8} & \frac{1}{8} & \frac{1}{8} & \frac{3}{8} & \frac{1}{8}\\
  -\frac{1}{8} & \frac{1}{8} & \frac{1}{8} & \frac{1}{8} & \frac{3}{8}
 \end{bmatrix*},\\
Y_{5} &= \renewcommand*{\arraystretch}{1.2}
 \begin{bmatrix*}[r]
  \frac{3}{4} & -\frac{3}{8} & -\frac{3}{8} & -\frac{3}{8} & -\frac{3}{8} \\
  -\frac{3}{8} & \frac{3}{8} & \frac{1}{8} & \frac{1}{8} & \frac{1}{8}\\
  -\frac{3}{8} & \frac{1}{8} & \frac{3}{8} & \frac{1}{8} & \frac{1}{8}\\
  -\frac{3}{8} & \frac{1}{8} & \frac{1}{8} & \frac{3}{8} & \frac{1}{8}\\
  -\frac{3}{8} & \frac{1}{8} & \frac{1}{8} & \frac{1}{8} & \frac{3}{8}
 \end{bmatrix*},
\end{align*}
and $Y_2, Y_3, Y_4$ can be determined from $Y_1$. The rational numbers above are all chosen by observing the floating numbers output of the SDP \eqref{eq:lambda2}.

The values of the matrices $Y_i$'s for $n = 2,3,4$ allow us to guess that the variables  $a, b, c, d, e, f, g, x, y, z, w$ are:
\begin{equation}\label{eq:coeff}
\begin{gathered}
a = \frac{5(n-1)}{4},\quad b = -\frac{3(n-1)}{2n},\quad  c = \frac{3-n}{2n},\\
d = f = z = \frac{2(n-1)}{n^2}, \quad e = g = w = \frac{n-2}{n^2},\\
x = \frac{n-1}{4},\quad y = -\frac{n-1}{2n}.
\end{gathered}
\end{equation}
The proof of our next theorem will ascertain that these choices are indeed correct --- they yield the sum-of-squares decomposition in \eqref{eq:sos} for $m = 2$.

\begin{theorem}[Better Recht--R\'e for $m =2$]\label{thm:improve2}
Let $A_1, \dots, A_n$ be positive semidefinite matrices. If $A_1 + \dots + A_n \preceq n I$, then 
\[
-\frac{1}{4}n(n-1)I \preceq \sum_{i\neq j}A_i A_j  \preceq n(n-1)I .
\]
\end{theorem}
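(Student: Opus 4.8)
The right-hand inequality $\sum_{i\ne j}A_iA_j \preceq n(n-1)I$ is exactly the one already proved in Theorem~\ref{theorem1}, so the only new content is the sharpened lower bound: we must show
\[
\tfrac14 n(n-1) I + \sum_{i\ne j} A_iA_j \;\succeq\; 0
\]
whenever $A_1,\dots,A_n\succeq 0$ and $nI - A_1 - \dots - A_n \succeq 0$. The plan is to produce an explicit sum-of-squares certificate: with $\beta=(1,X_1,\dots,X_n)$ the monomial basis of $\mathbb{R}\langle X_1,\dots,X_n\rangle_1$ and $X_{n+1}\coloneqq n - X_1 - \dots - X_n$, I would prove the noncommutative polynomial identity
\[
\tfrac14 n(n-1) + \sum_{i\ne j} X_iX_j \;=\; \sum_{i=1}^{n+1} \tr\!\bigl(\beta X_i\beta^\tp Y_i\bigr),
\]
where $Y_1,\dots,Y_{n+1}$ are the matrices whose entries are read off from \eqref{eq:coeff}, with $Y_2,\dots,Y_n$ obtained from $Y_1$ by transposing the distinguished index ($Y_i = P Y_1 P^\tp$ for the permutation matrix $P$ swapping the basis elements $X_1$ and $X_i$), so that each $Y_i$ is congruent to $Y_1$. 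Granting the identity together with $Y_1\succeq 0$ and $Y_{n+1}\succeq 0$, we substitute $X_i = A_i$: the left side becomes the matrix we want to show is positive semidefinite, and each term on the right equals $\sum_j f_j(A)^\tp A_i f_j(A)$ for linear polynomials $f_j$ (writing $Y_i = \sum_j u_ju_j^\tp$ and $f_j = \beta^\tp u_j$), hence is positive semidefinite since $A_i\succeq 0$ for $i\le n$ and $A_{n+1}=nI-\sum_iA_i\succeq 0$. Only the trivial ``sum-of-squares $\Rightarrow$ positivity'' direction is used, so none of the boundedness or interior hypotheses of Theorem~\ref{thm:NP} enter.

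Two verifications remain. First, the polynomial identity, which I would check by comparing coefficients of monomials of each degree $0,1,2,3$. The degree-$3$ monomials $X_aX_bX_c$ cancel: their coefficient on the right is $(Y_b)_{ac}-(Y_{n+1})_{ac}$, the second term coming from the $-\sum_m X_aX_mX_c$ part of $\beta X_{n+1}\beta^\tp$, and this is zero because the bottom-right $n\times n$ block of every $Y_i$ ($i=1,\dots,n+1$) is the common matrix $\tfrac1n I_n + e J_n$ — precisely the equalities $d=f=z$, $e=g=w$, and $d-e=\tfrac1n$ coming from \eqref{eq:coeff}. Degrees $0,1,2$ then reduce to the scalar identities $nx=\tfrac14 n(n-1)$, $a-x+2ny=0$, $2b-2y+nd=0$, and $2c-2y+ne=1$, each a one-line substitution from \eqref{eq:coeff}.

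Second, $Y_1\succeq 0$ and $Y_{n+1}\succeq 0$ (the remaining $Y_i$ being congruent to $Y_1$). Here I would exploit the residual permutation symmetry to block-diagonalize. The matrix $Y_{n+1}$ commutes with every permutation of its last $n$ coordinates, so it acts as the scalar $d-e=\tfrac1n>0$ on the $(n-1)$-dimensional ``sum-zero'' subspace of those coordinates, and as a $2\times2$ matrix on the complementary plane spanned by the first basis vector and the all-ones vector of the last $n$; in an orthonormal basis this $2\times2$ matrix has diagonal entries $x=\tfrac{n-1}4\ge 0$ and $d+(n-1)e=\tfrac{n-1}n\ge 0$ and determinant $0$, so $Y_{n+1}\succeq0$. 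Similarly $Y_1$ commutes with permutations of coordinates $2,\dots,n$, hence acts as $d-e=\tfrac1n$ with multiplicity $n-2$ and as a $3\times3$ matrix $M$ on the span of the first two basis vectors and the all-ones vector of coordinates $2,\dots,n$; I would certify $M\succeq 0$ by checking that its three diagonal entries and three $2\times2$ principal minors are nonnegative and that $\det M=0$ — each an elementary rational identity in $n$ (for instance the relevant minors equal $ad-b^2=\tfrac{(n-1)^2}{4n^2}$ and $a\bigl(d+(n-2)e\bigr)-(n-1)c^2=\tfrac{(n-1)(2n-1)^2}{4n^2}$).

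All the arithmetic is routine; the two points carrying actual content are recognizing that the bottom-right blocks of the $Y_i$ must coincide (which both forces the degree-$3$ cancellation and pins down part of \eqref{eq:coeff}), and the symmetry reduction that collapses an $(n+1)\times(n+1)$ semidefiniteness question to fixed-size checks uniform in $n$. I expect the $3\times3$ positivity check for $Y_1$, handled uniformly over all $n\ge 2$, to be the fiddliest step, though it remains entirely elementary.
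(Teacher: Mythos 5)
Your proof is correct and rests on exactly the same certificate as the paper's: the Gram matrices $Y_1,\dots,Y_{n+1}$ built from the coefficients in \eqref{eq:coeff}. The only difference is how the certificate is verified. The paper writes it directly as a list of manifestly nonnegative noncommutative squares --- the five families \eqref{eq:a1}--\eqref{eq:a4} are precisely (symmetrized) factorizations of the $Y_i$'s --- so positive semidefiniteness of each summand is automatic and only the algebraic identity needs checking. You instead keep the $Y_i$ in Gram form, verify the polynomial identity by coefficient matching, and then prove $Y_1, Y_{n+1}\succeq 0$ separately. All of your checks are sound: the degree-$3$ cancellation does follow from $d=f=z$, $e=g=w$; the scalar identities hold (e.g.\ $2c-2y+ne=\frac{3-n}{n}+\frac{n-1}{n}+\frac{n-2}{n}=1$); the symmetry reduction correctly isolates the scalar $d-e=1/n>0$ on the sum-zero subspaces; your criterion for the residual $3\times3$ block (all seven principal minors nonnegative) is a valid characterization of positive semidefiniteness for symmetric matrices; the stated minors $ad-b^2=\frac{(n-1)^2}{4n^2}$ and $a\bigl(d+(n-2)e\bigr)-(n-1)c^2=\frac{(n-1)(2n-1)^2}{4n^2}$ are correct; and $\det M=0$ does hold for all $n$ (both Gram matrices are singular, as expected for an optimal SDP solution). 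Your route carries the extra burden of the explicit PSD verification, but in exchange it makes the provenance of the certificate from the SDP \eqref{eq:lambda2} transparent and avoids having to guess the factored forms \eqref{eq:a2}--\eqref{eq:a3}.
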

\begin{proof}
The upper bound has already been established in Theorem~\ref{theorem1}. It remains to establish the lower bound. We start from the following readily verifiable inequalities
\begin{align}\label{eq:a1}
\frac{1}{2n(n-1)}(A_j-A_k)A_i(A_j-A_k) &\succeq 0,\\
\frac{5(n-1)}{4}\Bigl(I-\frac{6}{5n}A_i+\frac{2(3-n)}{5n(n-1)}\sum_{j\neq i}A_j\Bigr)& \label{eq:a2} \\
A_i\Bigl(I-\frac{6}{5n}A_i+\frac{2(3-n)}{5n(n-1)}\sum_{j\neq i}A_j\Bigr) &\succeq 0, \nonumber\\
\frac{n-1}{5n^2}\Bigl(A_i+\frac{2n-1}{n-1}\sum_{j\neq i}A_j\Bigr)
& \label{eq:a3} \\
A_i\Bigl(A_i+\frac{2n-1}{n-1}\sum_{j\neq i}A_j\Bigr) &\succeq 0,\nonumber\\
\frac{1}{2n^2}(A_j-A_k)\Bigl(n- \sum_i A_i\Bigr)(A_j-A_k) &\succeq 0,\label{eq:a5}\\
\frac{n-1}{4}\Bigl(I-\frac{2}{n}\sum_i A_i\Bigr)\Bigl(n- \sum_i A_i\Bigr)& \label{eq:a4}\\
\Bigl(I-\frac{2}{n}\sum_i A_i\Bigr) &\succeq 0.\nonumber
\end{align}
Sum \eqref{eq:a1} over all distinct $i,j,k$; sum \eqref{eq:a2} over all $i$; sum \eqref{eq:a3} over all $i$; sum \eqref{eq:a5} over all distinct $j,k$; add all results to \eqref{eq:a4}. The final inequality is our required lower bound.
\end{proof}
For $n=m=2$, the new lower bound is sharp. Take
\[\renewcommand*{\arraystretch}{1.3}
A_{1} = 
\begin{bmatrix}
\frac{3}{2} & 0 \\
0 & 0
\end{bmatrix}, \qquad A_{2} = 
\begin{bmatrix}
\frac{1}{6} & \frac{\sqrt{2}}{3} \\
\frac{\sqrt{2}}{3} & \frac{4}{3}
\end{bmatrix},
\]
then $\lVert A_1+A_2\rVert = 2$ and the smallest eigenvalue of $A_1A_2+A_2A_1$ is $-1/2$. We conjecture that this bound is sharp for all $m=2$, $n\geq 2$.

The method in this section also extends to higher $m$. For example, we may impose symmetry constraints for $m=n=3$ and see if the $Y_1, Y_2, Y_3, Y_4$ obtained have rational values, and if so write down a sums-of-squares proof by factoring the $Y_i$'s.

\section{Conclusion and open problems}

We conclude our article with a discussion of some open problems and why we think the Recht--R\'e conjecture, while false as it is currently stated, only needs to be refined.

An immediate open question is whether the conjecture is true for $m = 4$: Table~\ref{table1} shows that it  holds for $(m,n) =(4,4)$ and $(4,5)$; we suspect that it is  true for all $n \ge 4$.

As we pointed out after Conjecture~\ref{conjL}, the Recht--R\'e inequality as stated in \eqref{eq:conj1} conceals an asymmetry --- it  actually contains two inequalities, as shown in \eqref{eq:conjL}. What we have seen is that the lower bound is never attained in any of the cases we have examined. For $m =2$ and $3$, the lower bound is too large, and we improved it in Theorem~\ref{thm:improve2} and the proof of Theorem~\ref{theorem 2} respectively. For $m = 5$, the lower bound is too small, which is why the Recht--R\'e inequality is false. A natural follow-up question is then: ``What is the correct lower bound?'' On the other hand, we conjecture that the remaining half of the Recht--R\'e inequality, i.e., the upper bound in \eqref{eq:conjL}, holds true for all $m \le n \in \mathbb{N}$.

\citet{recht2012toward} has another conjecture similar to Conjecture~\ref{conj1} but where the norms appear after the summation.
\begin{conj}{2}[\citealt{recht2012toward}] \label{conj2}
Let $A_1,  \dots, A_n$ be positive semidefinite matrices. Then
\begin{multline*}
\frac{1}{n^m}\sum_{1\leq j_1,\dots, j_m \leq n}\hspace*{-4ex} \lVert A_{j_1}\cdots A_{j_m}\rVert \geq \\ \frac{(n-m)!}{n!} \sum_{\substack{1\leq j_1,\dots, j_m \leq n, \\j_1,\dots, j_m \; \text{distinct}}} \hspace*{-4ex}\lVert A_{j_1}\cdots A_{j_m}\rVert .
\end{multline*}
\end{conj}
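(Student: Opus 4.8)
The plan is to emulate the scalar proof of Maclaurin's inequality, but carried out with the spectral norm and exploiting two features special to positive semidefinite matrices: submultiplicativity together with $\lVert A_i^k\rVert = \lVert A_i\rVert^k$, and a \emph{folding} inequality that dominates the norm of a product by a geometric mean of norms of shorter palindromic words in which an index is repeated. Write $a_i \coloneqq \lVert A_i\rVert$ and $F(j_1,\dots,j_m) \coloneqq \lVert A_{j_1}\cdots A_{j_m}\rVert$. Splitting $\sum_{1\le j_1,\dots,j_m\le n} F$ into its distinct-index part and its repeated-index part, and recalling that $(n-m)!/n!$ is the reciprocal of the number $D \coloneqq n!/(n-m)!$ of distinct $m$-tuples, a one-line rearrangement shows Conjecture~\ref{conj2} to be \emph{equivalent} to the assertion that the average of $F$ over the $n^m - D$ tuples with a repeated index is at least its average over the $D$ tuples with distinct indices. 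It therefore suffices to dominate each distinct-index term $F(j_1,\dots,j_m)$ by a suitable average of repeated-index terms, with controlled multiplicities.

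The two ingredients are: (i) $F(j_1,\dots,j_m) \le a_{j_1}\cdots a_{j_m}$ by submultiplicativity, with $\lVert A_i^k\rVert = a_i^k$; and (ii) the folding inequality, valid for all $X, Y$ and positive semidefinite $B$,
\[
\lVert XBY\rVert^2 = \bigl\lVert Y^{\tp}B^{1/2}(B^{1/2}X^{\tp}XB^{1/2})B^{1/2}Y\bigr\rVert \le \lVert XBX^{\tp}\rVert\,\lVert Y^{\tp}BY\rVert,
\]
where the last step uses $\lVert B^{1/2}X^{\tp}XB^{1/2}\rVert = \lVert XBX^{\tp}\rVert$ and $\lVert B^{1/2}Y\rVert^2 = \lVert Y^{\tp}BY\rVert$. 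Applied with $X = A_{j_1}\cdots A_{j_{k-1}}$, $B = A_{j_k}$, $Y = A_{j_{k+1}}\cdots A_{j_m}$, it bounds $F(j_1,\dots,j_m)$ by $\sqrt{\lVert A_{j_1}\cdots A_{j_k}\cdots A_{j_1}\rVert\,\lVert A_{j_m}\cdots A_{j_k}\cdots A_{j_m}\rVert}$, a geometric mean of norms of two palindromes, both repeating indices.

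For $m = 2$ this closes immediately: every repeated-index pair is $(i,i)$, and $F(i,j) \le a_ia_j \le \tfrac12(a_i^2 + a_j^2) = \tfrac12(\lVert A_i^2\rVert + \lVert A_j^2\rVert)$, so $\sum_{i\ne j}F(i,j) \le (n-1)\sum_i \lVert A_i^2\rVert$, which is exactly the required comparison of averages. For $m = 3$, fold around the middle factor to get $F(i,j,k) \le \sqrt{\lVert A_iA_jA_i\rVert\lVert A_kA_jA_k\rVert} \le \tfrac12(\lVert A_iA_jA_i\rVert + \lVert A_kA_jA_k\rVert)$; summing over distinct triples and using symmetry yields $\sum_{i,j,k \text{ distinct}} F(i,j,k) \le (n-2)\sum_{i\ne j}\lVert A_iA_jA_i\rVert$. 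Collecting the repeated-index terms, the claim reduces to the scalar-flavoured inequality
\[
\sum_i a_i^3 + 2\sum_{i\ne j}\lVert A_i^2A_j\rVert \ge \frac{2n-1}{n-1}\sum_{i\ne j}\lVert A_iA_jA_i\rVert,
\]
which is sharp at $A_1 = \dots = A_n = I$. One proves it from two elementary facts: $\lVert A_i^2A_j\rVert \ge \rho(A_i^2A_j) = \rho(A_iA_jA_i) = \lVert A_iA_jA_i\rVert$ (the spectral norm dominates the spectral radius, $\rho(A_i(A_iA_j)) = \rho((A_iA_j)A_i)$, and $A_iA_jA_i \succeq 0$) and $\lVert A_iA_jA_i\rVert \le a_i^2a_j$; these reduce it to $n\sum_i a_i^3 \ge \bigl(\sum_i a_i\bigr)\bigl(\sum_i a_i^2\bigr)$, which is Chebyshev's sum inequality. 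The general $m$ case would run the folding recursively --- inducting on the number of distinct indices in the word --- to dominate each distinct-index term by a weighted geometric mean of norms of palindromic words with repetitions, then pass to arithmetic means via AM--GM.

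The step I expect to be the main obstacle is the combinatorial accounting for general $m$. After folding, each distinct-index term is charged onto several repeated-index terms, and the folding pattern must be chosen so that the total weight landing on any \emph{fixed} repeated-index word stays within the budget allotted by the reduction --- a fractional-matching constraint. Already the crude ``fold once in the middle'' scheme overcharges the most symmetric repeated patterns by a factor growing with $n$ when $m \ge 4$, so one must both spread the charge across several foldings and retain, rather than discard, the contributions of the less-repeated words, alongside a short list of auxiliary spectral-norm inequalities generalising $\lVert A_i^2A_j\rVert \ge \lVert A_iA_jA_i\rVert$. Making this balancing work uniformly in $m$ and $n$ --- rather than any individual matrix manipulation --- is the real difficulty, and it is conceivable that it requires an idea beyond folding alone.
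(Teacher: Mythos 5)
This statement is a conjecture in the paper, not a theorem: the paper only records that \citet{israel2016arithmetic} established the $m=2$ and $m=3$ cases for all unitarily invariant norms, explicitly leaves the general case open, and offers no proof here, so there is nothing to compare your attempt against. That said, your folding lemma $\lVert XBY\rVert^2 \le \lVert XBX^\tp\rVert\,\lVert Y^\tp BY\rVert$ for $B\succeq 0$ is correct, your rewriting as ``average over repeated-index tuples $\ge$ average over distinct-index tuples'' is the right normal form, and your $m=2$ and $m=3$ derivations check out. They are, however, tied to the spectral norm via $\rho(PQ)=\rho(QP)$ and $\lVert\cdot\rVert=\rho(\cdot)$ on positive semidefinite arguments, so they do not recover the full unitarily invariant strength of the known $m\le 3$ results.

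The genuine gap is exactly where you flag it yourself. Folding once dominates $\lVert A_{j_1}\cdots A_{j_m}\rVert$ by a geometric mean of two palindromes each carrying a \emph{single} repeated index, but the conjecture requires charging the distinct-index mass onto repeated-index words with \emph{all} multiplicity patterns, with per-word budgets fixed by the ratio of $n^m$ to $n!/(n-m)!$. You observe that the naive middle fold already overcharges the most symmetric repeated words when $m\ge 4$, and that one needs a fractional-matching charging scheme together with auxiliary spectral estimates generalizing $\lVert A_i^2 A_j\rVert \ge \lVert A_iA_jA_i\rVert$ --- but none of this is constructed, let alone shown to be satisfiable uniformly in $m$ and $n$. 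That missing balancing argument is precisely the open content of Conjecture~2, so the proposal proves nothing beyond the already-known $m\le 3$ cases. Given that the paper's companion Conjecture~1 turns out to be false already at $m=n=5$, one should at minimum numerically probe the charging inequality your scheme would require for small $m\ge 4$ before trusting that folding alone closes the general case.
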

This has been established for $m=2$ and $3$ for any unitary invariant norm in \citet{israel2016arithmetic}. It is not clear to us if the  noncommutative Positivstellensatz might perhaps also shed light on this related conjecture.

Lastly, if our intention is to analyze the relative efficacies of with- and without-replacement sampling strategies in randomized algorithms, then it is more pertinent to study these inequalities for random matrices, i.e., we do not just assume that the indices are random variables but also the entries of the matrices. For example, if we want to analyze the Kaczmarz algorithm, then we ought to take expectation not only with respect to all permutations but also with respect to how we generate the entries of the matrices. This would provide a more realistic platform for comparing with- and without-replacement sampling strategies.

% In the unusual situation where you want a paper to appear in the
% references without citing it in the main text, use \nocite

\bibliographystyle{icml2020}

\end{document}